\numberwithin{equation}{section}
\newtheorem{theorem}{Theorem}[section]
\newtheorem*{theorem*}{Theorem}
\newtheorem{lemma}[theorem]{Lemma}
\newtheorem*{lemma*}{Lemma}
\newtheorem{corollary}[theorem]{Corollary}
\newtheorem*{corollary*}{Corollary}
\newtheorem{proposition}[theorem]{Proposition}
\newtheorem*{proposition*}{Proposition}
\theoremstyle{definition}
\newtheorem{definition}[theorem]{Definition}
\newtheorem{remark}[theorem]{Remark}
\newtheorem*{remark*}{Remark}
\theoremstyle{remark}
\newcommand{\slashed}[1]{#1\kern-0.65em /}
\newcommand{\A}{\mathcal A}
\newcommand{\B}{\mathcal B}
\newcommand{\sflow}{\textup{SF}}
\newcommand{\Id}{\textup{Id}}
\newcommand{\End}{\textup{End}}
\newcommand{\ind}{\textup{Ind}}
\newcommand{\cl}{\textup{c}}
\newcommand{\ldirac}{\widehat D}
\newcommand{\ahat}{\hat A}
\newcommand{\even}{\textup{even}}
\newcommand{\odd}{\textup{odd}}
\newcommand{\reta}{\xi}
\newcommand{\tr}{\text{tr}}
\newcommand{\Ch}{\textup{Ch}}
\newcommand{\sch}{ \widetilde{Ch}}
\newcommand{\tch}{{\textup{Tch}}}
\begin{document}

\title[Relative Index Pairing and Odd Index Theorem]{Relative Index Pairing and Odd Index Theorem for Even Dimensional Manifolds}
\author{Zhizhang Xie}
\address{Department of Mathematics, The Ohio State University,
Columbus, OH, 43210-1174, USA}
\email{xiezz@math.ohio-state.edu}

\subjclass[2000]{58Jxx; 46L80 }

\begin{abstract}
We prove an analogue for even dimensional manifolds of the Atiyah-Patodi-Singer twisted index theorem for trivialized flat bundles. We show that the eta invariant appearing in this result coincides with the eta invariant by Dai and Zhang up to an integer. We also obtain the odd dimensional counterpart for manifolds with boundary of the relative index pairing by Lesch, Moscovici and Pflaum. 
\end{abstract}
\keywords{APS twisted index theorem, manifolds with boundary, relative index pairing}
\thanks
{ The author was partially supported by the US National Science Foundation awards no. DMS-0652167.}	

\maketitle

\section*{Introduction}

In this article, we will prove an analogue for even dimensional manifolds of the Atiyah-Patodi-Singer twisted index theorem for trivialized flat bundles over odd dimensional closed manifolds \cite[Proposition 6.2]{A-P-S76}, and some related results. For notational simplicity, we will restrict the discussion mainly to spin manifolds. However all results can be straightforwardly extended to general manifolds. Unless we specify otherwise, we always fix the Riemannian metric for each manifold in this article and use the associated Levi-Civita connection to define its characteristic classes.

To motivate the subject matter of this paper, we begin by recalling the APS twisted index theorem for odd dimensional closed manifolds in the following form, cf. \cite[Corollary 7.9]{L-M-P09}. For   $(p_s)_{0\leq s \leq 1} \in M_k(C^\infty(N))$, $s\in [0, 1]$, a smooth path of projections over $N$, one has

\begin{equation*}
	\int_0^1 \frac{1}{2} \frac{d}{ds}  \eta(p_s D p_s)ds  = \int_N \ahat(N) \wedge \tch_\bullet(p_s).
\end{equation*}
Here $p_s D p_s$ is the Dirac operator twisted by $p_s$, $\eta(p_sD p_s)$ its $\eta$-invariant,  $\ahat(N)$ the $\ahat$-genus form of $N$ and $\tch_\bullet(p_s)$ is the Chern-Simons transgression form of $(p_s)_{0\leq s \leq 1}$, cf. Section $ \ref{sec:chern}$.

To prove our analogue for even dimensional closed manifolds, we shall replace a path of projections by a path of unitaries. The more interesting issue is what should replace the $\eta$-invariant appearing on the left hand side of the above formula. To answer this, let us first consider the case where the manifold in question bounds, that is, it is the boundary of some spin manifold. In this case, the $\eta$-invariant by Dai and Zhang \cite[Definition $2.2$]{XD-WZ06} is the right candidate, cf. Section $ \ref{sec:dz}$ below. Indeed, suppose the even dimensional manifold $Y$ is the boundary of a spin manifold $X$ and $(U_s)_{0\leq s \leq 1}$ is the restriction to $Y$ of a smooth path of unitaries over $X$. Denote the $\eta$-invariant of Dai and Zhang by $\eta(Y, U_s)$ for each $s\in [0,1]$, then  
\begin{equation}\label{eq:main-dz}
\int_0^1 \frac{1}{2}\frac{d}{ds} \eta(Y, U_s )ds  = \int_Y \ahat(Y) \wedge \tch_\bullet(U_s) .
\end{equation} 
When $Y$ bounds, it follows from the cobordism invariance of the index of Dirac operators that $\ind(D^+) = 0$, where $D^+$ is the restriction of the Dirac operator over $Y$ to the even half of the spinor bundle according to its natural $\mathbb Z_2$-grading. The condition $\ind(D^+)= 0$ is crucial for the definition of the $\eta$-invariant by Dai and Zhang, however is often not satisfied by even dimensional closed spin manifolds in general. To cover the general case, we shall use another approach where we lift the data to $\mathbb S^1\times Y$. The main ingredient of the method of proof is using an explicit formula of the cup product $K^1(\mathbb S^1) \otimes K^1(Y) \to K^0(\mathbb S^1\times Y)$, inspired by the Powers-Rieffel idempotent construction, cf. \cite{MR81}. In fact, the formula given for the case when $Y=\mathbb S^1$ by Loring in \cite{TL88} also works for all manifolds in general, cf. Section $ \ref{sec:ep}$ below. Our analogue for even dimensional closed spin manifolds of the APS twisted index theorem (Theorem $\ref{Thm:main}$ below) is as follows.
\begin{theorem*}[\textbf{I}]
Let $Y$ be an even dimensional closed spin manifold and $(U_s)_{0\leq s \leq 1} \in U_k(C^\infty(Y))$ a smooth path of unitaries over $Y$. For $s\in [0, 1]$, $e_s\in M_{2k}(C^\infty(\mathbb S^1\times Y))$ is the projection defined as the cup product of $U_s$ with the generator $e^{2\pi i \theta}$ of $K^1(\mathbb S^1)$. Let  $D_{\mathbb S^1\times Y}$ be the Dirac operator over $\mathbb S^1\times Y$. Then
\begin{equation}\label{eq:main-circ}
  \int_0^1 \frac{1}{2}\frac{d}{ds} \eta(e_s D_{\mathbb S^1\times Y} e_s)ds  = \int_Y \ahat(Y) \wedge \tch_\bullet(U_s).
\end{equation}
\end{theorem*}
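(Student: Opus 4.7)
The plan is to deduce Theorem I from the odd-dimensional APS twisted index theorem recalled at the beginning of the introduction, applied to $N=\mathbb{S}^1\times Y$. Since $Y$ is even-dimensional, $N$ is an odd-dimensional closed spin manifold (with the product spin structure), and each $e_s\in M_{2k}(C^\infty(N))$ is a smooth path of projections, so the quoted theorem applies directly and yields
\begin{equation*}
\int_0^1 \tfrac{1}{2}\tfrac{d}{ds}\eta(e_s D_{\mathbb{S}^1\times Y} e_s)\,ds \;=\; \int_{\mathbb{S}^1\times Y}\ahat(\mathbb{S}^1\times Y)\wedge \tch_\bullet(e_s).
\end{equation*}
The entire content of Theorem I therefore reduces to verifying the geometric identity
\begin{equation*}
\int_{\mathbb{S}^1\times Y}\ahat(\mathbb{S}^1\times Y)\wedge \tch_\bullet(e_s) \;=\; \int_Y \ahat(Y)\wedge \tch_\bullet(U_s).
\end{equation*}

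Equipping $\mathbb{S}^1\times Y$ with the product Riemannian metric (any flat metric on $\mathbb{S}^1$), we have $\ahat(\mathbb{S}^1\times Y)=\pi_Y^*\ahat(Y)$, where $\pi_Y:\mathbb{S}^1\times Y\to Y$ is the projection, because $\ahat(\mathbb{S}^1)=1$. By Fubini/integration along the fiber, the identity further reduces to the de~Rham cohomological statement
\begin{equation*}
[(\pi_Y)_*\tch_\bullet(e_s)]\;=\;[\tch_\bullet(U_s)] \quad\text{in }\;H^{\text{even}}(Y;\mathbb{R}),
\end{equation*}
which suffices since $\ahat(Y)$ is closed and $Y$ is closed. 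So the heart of the matter is this cohomological identity, which expresses the naturality of the Chern-Simons transgression under the external cup product $K^1(\mathbb{S}^1)\otimes K^1(Y)\to K^0(\mathbb{S}^1\times Y)$ implemented, at the level of projections, by the explicit Powers-Rieffel-Loring formula of Section~\ref{sec:ep}.

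To establish this naturality, I would write $e_s=u\#U_s$ with $u=e^{2\pi i\theta}$ the fixed generator of $K^1(\mathbb{S}^1)$, and use the Loring formula for $\#$. Since $u$ is independent of $s$, only the $Y$-factor contributes under differentiation in $s$; computing $\tch_\bullet(e_s)$ directly from the Connes-Chern-Simons formulas and collecting $\theta$-derivatives should yield a factorization $\tch_\bullet(e_s)\equiv \ch(u)\wedge \tch_\bullet(U_s)$ modulo exact forms, after which fiber-integration over $\mathbb{S}^1$ gives the desired equality because $\int_{\mathbb{S}^1}\ch(u)=1$. The main obstacle is the bookkeeping: Loring's formula is trigonometric in $\theta$ and combinatorially involved, so the direct verification of the factorization requires careful manipulation of $e_s$, $de_s$, and their polynomial invariants. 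A cleaner alternative is to first establish, once and for all, that Loring's projection-level cup product is compatible with the wedge product of Chern-Weil representatives of $\ch$ (a homotopy-invariant statement, independent of the specific path $(U_s)$), and then deduce the transgression version by the standard principle that $\tch_\bullet$ is the primary transgression of $\ch$ under a one-parameter variation. With that compatibility in hand, the $\theta$-integration is purely formal and the theorem follows.
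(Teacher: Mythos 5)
Your overall route is the same as the paper's: apply the odd-dimensional APS twisted index theorem in its variational form $\eqref{Eq:vareta}$ to the closed odd-dimensional spin manifold $\mathbb S^1\times Y$ and the path of projections $e_s$, use $\ahat(\mathbb S^1\times Y)=\pi_Y^\ast\ahat(Y)$, and reduce everything to a fiber-integration statement comparing $\tch_\bullet(e_s)$ with $\tch_\bullet(U_s)$. The problem is that this last statement, $\pi_\ast\tch_\bullet(e_s)=\tch_\bullet(U_s)$ (Proposition $\ref{prop:product}$), is the actual content of the theorem, and your proposal does not establish it: your first route explicitly defers the computation of $\Ch_\bullet(e_s)$ for the Loring projection as ``bookkeeping.'' That computation is Lemma $\ref{Lemma:ch}$ (where the terms even in $U^{-1}dU$ are killed by a trace-cyclicity argument) combined with the integral identity $\int_0^1\left[(2-4f)h'h^{2k-1}+4f'h^{2k}\right]d\theta=\frac{(k-1)!\,(k-1)!}{(2k-1)!}$ of Lemma $\ref{Lemma:id}$; these coefficients are exactly what make the form-level identity $\pi_\ast\Ch_\bullet(e_U)=-\Ch_\bullet(U)$ hold on the nose, degree by degree (note also the sign, which your heuristic factorization through $\ch(u)$ with $\int_{\mathbb S^1}\ch(u)=1$ does not see). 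Without carrying this out, nothing has been proved.

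The ``cleaner alternative'' does not close the gap as stated. First, your intermediate reduction to ``$[(\pi_Y)_\ast\tch_\bullet(e_s)]=[\tch_\bullet(U_s)]$ in $H^{\even}(Y;\mathbb R)$'' is not well posed: $\tch_\bullet(e_s)$ and $\tch_\bullet(U_s)$ are not closed forms (their differentials are $\Ch_\bullet(e_1)-\Ch_\bullet(e_0)$ and $\Ch_\bullet(U_1)-\Ch_\bullet(U_0)$ respectively), so they have no de Rham classes; what must be shown is that the difference $(\pi_Y)_\ast\tch_\bullet(e_s)-\tch_\bullet(U_s)$ is exact. Second, a homotopy-invariant compatibility of the Loring cup product with $\ch$ at the level of cohomology classes is too weak to yield this: if one only knows $\pi_\ast\Ch_\bullet(e_{\mathfrak U})=-\Ch_\bullet(\mathfrak U)+d\beta$ for the unitary $\mathfrak U\in U_k(C^\infty([0,1]\times Y))$ encoding the path, then writing $\beta=\beta_0+ds\wedge\beta_1$ and separating the $ds$-component produces an error term $\beta_0(1)-\beta_0(0)$ which need not be exact, so the transgression identity does not follow ``purely formally'' from a cohomological statement. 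What makes the deduction formal in the paper is the exact, form-level identity $\pi_\ast\Ch_\bullet(e_W)=-\Ch_\bullet(W)$, valid for every unitary $W$ over every manifold, with or without boundary; applied to $\mathfrak U$ over the cylinder $[0,1]\times Y$ and decomposed into $ds$-components it gives $\pi_\ast\tch_\bullet(e_s)=\tch_\bullet(U_s)$ exactly. That form-level identity is obtained only through the explicit Loring-formula computation you set aside, so your proposal has a genuine gap precisely at the crux of the theorem.
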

The formula for $e_s$ is given in Section $\ref{sec:ep}$. A priori, the $\eta$-invariants in the formulas $\eqref{eq:main-dz}$ and $\eqref{eq:main-circ}$ appear to be different, we however will show that they are equal to each other modulo $\mathbb Z$ (Theorem $\ref{thm:eta}$ below) in the case where $Y$ bounds. 
\begin{theorem*}[\textbf{II}]
Suppose $Y$ is the boundary of an odd dimensional spin manifold. If $U\in U_k(C^\infty(Y))$ is a unitary over $Y$ and $e_U$ is the cup product of $U$ with $[e^{2 \pi i\theta}]\in K^1(\mathbb S^1)$, then one has
\[\eta( Y, U) = \eta(e_U D_{\mathbb S^1\times Y}  e_U) \mod \mathbb Z. \]

\end{theorem*}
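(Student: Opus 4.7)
The plan is to apply both formulas \eqref{eq:main-dz} (Dai--Zhang) and \eqref{eq:main-circ} (Theorem I) to a common smooth path of unitaries, compare the left-hand sides (which must agree since the right-hand sides do), and reduce the claim modulo $\mathbb Z$ to a base case at the identity. After choosing an extension $\tilde U \in U_k(C^\infty(X))$ of $U$ to the filling spin manifold $X$ with $\partial X = Y$, pick a smooth path $(\tilde U_s)_{0\leq s\leq 1}$ in $U_k(C^\infty(X))$ from $I$ to $\tilde U$. Its restriction $U_s := \tilde U_s|_Y$ is a smooth path of unitaries over $Y$ with $U_0 = I$ and $U_1 = U$, to which both formulas apply.

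Since the right-hand sides of \eqref{eq:main-dz} and \eqref{eq:main-circ} both equal $\int_Y \ahat(Y)\wedge \tch_\bullet(U_s)$, the integrated left-hand sides coincide. Each expression $\int_0^1 \tfrac12 \tfrac{d}{ds}\eta\,ds$ differs from $\tfrac12(\eta(\text{end})-\eta(\text{start}))$ by the spectral flow of the corresponding family, an integer. Subtracting the two sides therefore yields
\[
\eta(Y, U) - \eta(e_U D_{\mathbb S^1\times Y}e_U) \;\equiv\; \eta(Y, I) - \eta(e_I D_{\mathbb S^1\times Y}e_I) \pmod{2\mathbb Z},
\]
and \emph{a fortiori} modulo $\mathbb Z$, reducing the theorem to the base case $\eta(Y, I) \equiv \eta(e_I D_{\mathbb S^1\times Y}e_I) \pmod{\mathbb Z}$.

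For the base case, the Dai--Zhang invariant $\eta(Y, I)$ vanishes: at the trivial twist there is no nontrivial contribution, and $\ind(D^+) = 0$ by cobordism invariance since $Y$ bounds. On the right-hand side, Loring's explicit cup-product formula (cf.\ Section \ref{sec:ep}) shows that $e_I$ is a function of the $\mathbb S^1$-coordinate alone, representing the trivial class in $K^0(\mathbb S^1\times Y)$ up to a rank contribution. Homotoping $e_I$ through projections on $\mathbb S^1\times Y$ to a constant trivial projection shifts $\eta(e_I D e_I)$ by integer spectral flow, and the endpoint evaluates to an integer multiple of $\eta(D_{\mathbb S^1\times Y})$, which vanishes by the standard $\mathbb Z_2$-symmetry of the Dirac spectrum on an odd-dimensional closed spin product manifold. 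The main obstacle is this base-case comparison --- rigorously controlling the mod-$\mathbb Z$ ambiguity through the homotopy of projections, and identifying explicitly the two differently constructed $\eta$-invariants at $U = I$; in the case $[U] \notin \im(K^1(X)\to K^1(Y))$ we first stabilize $U$ by direct sum, using additivity of both invariants under orthogonal decomposition to reduce to the connected case.
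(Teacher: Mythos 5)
Your strategy has a genuine gap at its very first step: it requires connecting $U$ to the identity by a smooth path of unitaries over $Y$, and moreover (in order to invoke \eqref{eq:main-dz}) this path must arise as the restriction of a path of unitaries over the bounding manifold $X$. Neither is available in general. If $[U]\neq 0$ in $K^1(Y)$ there is no path from $I$ to $U$ at all, even after stabilization, since forming direct sums never changes the class $[U]$; and even when such a path exists on $Y$, the formula \eqref{eq:main-dz} is only stated (in the introduction, as a consequence of Dai--Zhang's index theorem, not proved in the paper) for paths that extend over $X$, which is obstructed because $K^1(X)\to K^1(Y)$ need not be surjective. Your proposed fix --- ``stabilize $U$ by direct sum and use additivity'' --- does not address either obstruction: stabilization leaves the $K$-theory class untouched, and additivity would reduce the claim for $U$ to the claim for another unitary in the same (or inverse) class, which is the original problem. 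In addition, the base case is not established: the assertion that $\eta(Y,I)=0$ needs an argument (it is the reduced eta of the cylinder operator $D_{[0,1]}$ with the boundary condition $P_0^I$ of \eqref{eq:path}), and the claim that homotoping $e_I$ to a constant projection changes $\eta(e_I D_{\mathbb S^1\times Y}e_I)$ only by integer spectral flow is incorrect as stated: by \eqref{Eq:vareta} the change also contains the continuous term $\int_{\mathbb S^1\times Y}\ahat\wedge\tch_\bullet(p_s)$, which here vanishes only because $\int_Y\ahat(Y)=0$ when $Y$ bounds --- a point you would have to make explicitly.

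The paper's proof avoids all of this and is essentially local on the operator side: it conjugates $e_U D_{\mathbb S^1\times Y}e_U$ by the explicit unitary $\mathcal U$ of Lemma \ref{Lemma:ep} to exhibit it as the cylinder operator $D^{\psi,U}_{[0,1]}$ on $[0,1]\times Y$ with the ``gluing'' boundary condition $P^U_{\pi/4}$, then rotates the boundary condition through the path $P^U_t$ of \eqref{eq:path} down to the APS-type condition $P^U_0$ used in Dai--Zhang's definition. Proposition \ref{prop:path} (the generalization of Br\"uning--Lesch) shows the eta invariant is constant along this rotation, so by the Kirk--Lesch variation formula the two eta invariants differ only by spectral-flow integers, yielding Theorem \ref{thm:eta} for every unitary $U$, with no homotopy to $I$, no extension to $X$, and no base case needed. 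If you want to salvage your comparison-of-variations idea, it can at best prove the statement for unitaries homotopic to ones for which the identity is already known, so some direct identification in the spirit of Section \ref{sec:eta} is unavoidable.
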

The method of proof is based on a slight generalization of a theorem by Br\"{u}ning and Lesch \cite[Theorem 3.9]{JB-ML99}, see Proposition $\ref{prop:path}$ below. In this sense, $\eta(e_U D_{\mathbb S^1\times Y} e_U)$ can be thought of as the extension to general even dimensional manifolds of the definition of the $\eta$-invariant by Dai and Zhang. 

The same technique used above also allows us to prove the following analogue (Theorem $\ref{Thm:oddpairing}$ below) for odd dimensional manifolds with boundary of the relative index pairing formula by Lesch, Moscovici and Pflaum \cite[Theorem $7.6$]{L-M-P09}. Suppose $M$ is an odd dimensional spin manifold with boundary $\partial M$. By a relative $K$-cycle $[U, V, u_s]\in K^1(M, \partial M)$, we mean $U, V\in U_n(C^\infty(M))$ are two unitaries over $M$ with $u_s \in U_n(C^\infty(\partial M))$, $s\in [0, 1]$, a smooth path of unitaries over $\partial M$ such that $u_0 = U|_{\partial M} $ and $u_1 = V|_{\partial M}$. We denote by $T_U$, resp. $T_V$, the Toeplitz operator on $M$ with respect to $U$, resp. $V$ (see Section $\ref{sec:eta}$ for details). 
\begin{theorem*}[\textbf{III}]
	Let $[U, V, u_s]$ be a relative $K$-cycle in $ K^1(M, \partial M)$. If $U$ and $V$ are constant along the normal direction near the boundary, then 
	\[  \ind_{[D]}( [U,V, u_s]) = \ind(T_V) -\ind(T_U) +\sflow \left(u_s^{-1}D_{[0,1]} u_s; P_0^{u_s}\right)\]
	where $\sflow \left(u_s^{-1}D_{[0,1]} u_s; P_0^{u_s}\right)$ is the spectral flow of the path of elliptic operators $(u_s^{-1}D_{[0,1]} u_s; P_0^{u_s})$, $s\in [0, 1]$, with Atiyah-Patodi-Singer type boundary conditions determined by $P_0^{u_s}$ as in $\eqref{eq:path}$.
\end{theorem*}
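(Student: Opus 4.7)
The plan is to realize the relative pairing as an index difference on an enlarged manifold obtained by attaching a cylinder, and then decompose that index along the glued seam. Attach $N = [0,1] \times \partial M$ with the product metric to $M$ along $\partial M$ (identifying $\{0\}\times\partial M$ with $\partial M$), forming $\tilde M = M \cup_{\partial M} N$; the result is diffeomorphic to $M$ but now carries an explicit cylindrical collar adjacent to the new boundary $\partial\tilde M = \{1\}\times\partial M$. The normal constancy of $U$ near $\partial M$ together with the matching condition $u_0 = U|_{\partial M}$ allows us to splice $U$ and the path $u_s$ into a smooth global unitary $\tilde W \in U_n(C^\infty(\tilde M))$, given by $\tilde W|_M = U$ and $\tilde W(s,x) = u_s(x)$ on $N$, with $\tilde W|_{\partial \tilde M} = V|_{\partial M}$. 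Extend $V$ to $\tilde V$ on $\tilde M$ by constancy along $N$. The unitary $\tilde V \tilde W^{-1}$ is trivial on $\partial \tilde M$ and, under the canonical isomorphism $K^1(\tilde M, \partial \tilde M) \cong K^1(M, \partial M)$, represents the class $[U, V, u_s]$. Using the definition of the relative index pairing as a difference of absolute Toeplitz indices when the boundary values agree, this gives
\[
\ind_{[D]}([U, V, u_s]) = \ind(T_{\tilde V}) - \ind(T_{\tilde W}),
\]
with both Toeplitz operators defined on $\tilde M$ via APS conditions attached to the common boundary value $V|_{\partial M}$. A straightforward retraction of the cylinder yields $\ind(T_{\tilde V}) = \ind(T_V)$.

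To compute $\ind(T_{\tilde W})$, split $\tilde M$ along the interior hypersurface $\partial M$ by adiabatic stretching of a collar (replacing a neighborhood of $\partial M$ by $[-R, R] \times \partial M$ and letting $R \to \infty$), so that the problem decouples in the limit into two independent APS problems on $M$ and $N$ with matching spectral projections at the cut. The $M$-side contribution is $\ind(T_U)$: the normal constancy of $U$ near $\partial M$ ensures that the twisted Dirac operator has product form on the collar, so the APS projection produced at the cut is precisely the one defining $T_U$. The $N$-side contribution is the index of $D_N$ twisted by the path $u_s$, subject to APS projections determined by $u_0$ at $s=0$ (matching the cut) and by $u_1 = V|_{\partial M}$ at $s=1$ (matching $\partial\tilde M$). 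The classical identification of index with spectral flow for a family of self-adjoint elliptic boundary value problems on a cylinder, together with reconciling the orientation of the outward normal between the two sides of the cut, identifies this cylinder index with $\sflow(u_s^{-1} D_{[0,1]} u_s; P_0^{u_s})$ up to sign. Substituting into the formula of the previous paragraph then yields
\[
\ind_{[D]}([U, V, u_s]) = \ind(T_V) - \ind(T_U) + \sflow(u_s^{-1} D_{[0,1]} u_s; P_0^{u_s}).
\]

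The main obstacle is the adiabatic/APS cutting formula itself: one must establish that the Toeplitz index $\ind(T_{\tilde W})$ decomposes additively into the $M$- and $N$-side indices with properly matched spectral projections on either face of the cut, once the twisting by $u_s$ is inserted and the outward-normal orientations reversed. The adiabatic stretching argument parallels the standard treatment for untwisted Dirac operators but must accommodate the non-trivial $u_s$-twist carried by $\tilde W$; the normal constancy of $U$ is what makes the $M$-side collapse cleanly to the APS problem defining $T_U$. Once the decoupling is in place, the identification of the $N$-side with the stated spectral flow follows from the product structure of $D$ on $N$ and the Br\"uning-Lesch framework already deployed in Proposition~\ref{prop:path}, applied to the family $u_s^{-1} D_{\partial M} u_s$ with the boundary projections $P_0^{u_s}$ at the two ends of the cylinder; the remaining sign bookkeeping is routine.
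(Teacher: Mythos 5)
There is a genuine gap, on two levels. First, your opening identity $\ind_{[D]}([U,V,u_s]) = \ind(T_{\tilde V}) - \ind(T_{\tilde W})$ is not a definition. The relative pairing is defined through the $K$-homology class of \cite{B-D-T89} and the isomorphism $K^1(M,\partial M)\cong K^1(M\setminus\partial M)$; that it coincides with a difference of Dai--Zhang Toeplitz indices when the two unitaries have the same boundary value is a theorem, which follows from \eqref{Eq:toep} (the $\reta(\partial M,\cdot)$ and Maslov terms depend only on the boundary restrictions and hence cancel) together with the cohomological expression for the pairing. You must invoke this explicitly, and once you do, you have already used the main analytic input of the paper's argument rather than bypassed it.

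Second, and more seriously, the computation of $\ind(T_{\tilde W})$ rests on two assertions that are exactly the hard content and are left unproved. (i) The splitting of the Toeplitz index under cutting $\tilde M$ along $\partial M$ is not the standard cut-and-paste statement for Dirac operators: $T_{\tilde W}$ is a compression $P_{\tilde W P^\partial \tilde W^{-1}}\circ \tilde W\circ P_{P^\partial}$ by \emph{global} nonnegative spectral projections of boundary value problems on all of $\tilde M$, so the adiabatic decoupling with correctly matched Calder\'on/APS projections at the seam, in the presence of the $u_s$-twist, would have to be established from scratch (you acknowledge this as ``the main obstacle,'' but it is the theorem). (ii) Even granting the decoupling, the cylinder contribution is not covered by the classical ``index equals spectral flow'' principle: that principle identifies the index of $\partial_s + A(s)$ on $N=[0,1]_s\times\partial M$ with the spectral flow of the cross-sectional family $A(s)$ (essentially $u_s^{-1}D^\partial u_s$ on the closed manifold $\partial M$), whereas the quantity in the statement is $\sflow\left(u_s^{-1}D_{[0,1]}u_s;P_0^{u_s}\right)$, the spectral flow of an $s$-parametrized family of self-adjoint \emph{boundary value problems} on a separate cylinder $[0,1]\times\partial M$ with $s$-dependent APS conditions \eqref{eq:path}. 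Converting between these two kinds of spectral flow (and the associated eta terms) is precisely what Proposition \ref{prop:path}, Theorem \ref{thm:eta} and the two homotopy-invariance lemmas accomplish in the paper, and it is not routine sign bookkeeping. By contrast, the paper never cuts: it lifts the cycle to $\mathbb S^1\times M$ via cup product with $[e^{2\pi i\theta}]$, applies the even-dimensional Lesch--Moscovici--Pflaum formula (Lemma \ref{lemma:oddindex}), and then trades the $\mathbb S^1$-lifted eta and spectral flow terms for the Dai--Zhang ones through Theorem \ref{thm:eta}; to make your route work you would need to supply proofs of (i) and (ii), which would amount to redeveloping comparable machinery.
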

This uses  Dai and Zhang's Toeplitz index theorem for odd dimensional manifolds with boundary \cite{XD-WZ06}. We shall give the details in Section $ \ref{sec:dz}$. 

It should be mentioned that, although the objects we work with are from classical geometry, the spirit of the proofs is very much inspired by methods from noncommutative geometry, cf. \cite{AC94}.

A brief outline of the article is as follows. In Section $ \ref{sec:pre}$, we recall some results about index pairings for manifolds with boundary. Section $ \ref{sec:ep}$ is devoted to the explicit formula of the cup product in $K$-theory mentioned earlier. This allows us to carry out explicit calculations for Chern characters in Section $ \ref{sec:chern}$. With these preparations, we prove an analogue for even dimensional manifolds of the APS twisted index theorem in Section $ \ref{sec:odd}$. In Section $ \ref{sec:eta}$, we show the equality of the two a priori different eta invariants. In the last Section, we prove the odd-dimensional counterpart of the relative index pairing formula by Lesch, Moscovici and Pflaum \cite[Theorem $7.6$]{L-M-P09}.

	
\subsection*{Acknowledgements}

I am greatly indebted to Henri Moscovici for his continuous support and advice. This paper grew out of numerous conversations with him. I want to thank Nigel Higson for helpful suggestions. I am grateful to Alexander Gorokhovsky for a careful reading of the first version of this paper as well as for many helpful comments. I started working on this problem during my visit at the Hausdorff Center for Mathematics in Bonn, Germany. I want to express my thanks to the center for its hospitality and to Matthias Lesch for the invitation, as well as for generously sharing with me his insights into the subject.

\section{Relative Index Pairing}\label{sec:pre}

Let $M$ be a compact smooth manifold with boundary $\partial M\neq \emptyset$. Following \cite[Sec. 2]{B-D-T89}, consider an elliptic first order differential operator
\[
D : C^\infty_c(M\setminus\partial M, E) \to C^\infty_c(M\setminus\partial M, E)
\]
where  $C^\infty_c(M\setminus\partial M, E)$ is the space of compactly supported smooth sections of the Hermitian vector bundle $E$. Such an operator has a number of extensions to become a closed unbounded operator on $H = L^2(M\setminus\partial M, E)$, e.g. $D_{min}$ and $D_{max}$ the minimum extension and the maximum extension respectively. Consider $D_e$  a closed extension of $D$ such that
\begin{equation}\label{Eq:domain}
	 D_{min} \subset D_e \subset D_{max}, 
\end{equation}
that is, $\mathcal D(D_{min}) \subset \mathcal D(D_e) \subset \mathcal D(D_{max})$.
Let
\[ B = \begin{pmatrix} 0 & D_e^\ast \\ D_e & 0 \end{pmatrix} \]
and
\[ F_e = B(B^2 + 1)^{-1/2} = \begin{pmatrix} 0 & T^\ast \\ T & 0 \end{pmatrix} \] 
with $T = D_e(D_e^\ast D_e + 1)^{-1/2} $ and $T^\ast = D_e^\ast(D_eD^\ast_e + 1)^{-1/2}$. Denote by $C_0(M\setminus\partial M)$ the space of continuous functions vanishing at infinity. Then the $\ast$-representation of $C_0(M\setminus\partial M)$ on $H\oplus H$ given by scalar multiplication, together with $F_e$, defines an element in $KK(C_0(M\setminus\partial M), \mathbb C)$, see \cite{B-D-T89} for the precise construction. Such a $K$-homology class turns out to be independent of the choice of a closed extension of $D$ \cite[Proposition 2.1 ]{B-D-T89}, and will be denoted $[D]$. Similarly for each formally symmetric elliptic operator, one constructs a cycle in $KK(C_0(M\setminus\partial M), \textup{Cl}_1)$ \cite[Section 2]{B-D-T89}, where $\textup{Cl}_1$ is the Clifford algebra with one generator. For each $[D]\in KK(C_0(M\setminus \partial M), \textup{Cl}_\bullet)$, one has the index pairing map
\[ \ind_{[D]} : K^\bullet(M\setminus \partial M) \to \mathbb Z.\]  

An element in $K^0(M\backslash \partial M)$ is represented by a triple $(E, F, \alpha)$ with $E$, $F$ vector bundles over $M\backslash \partial M$ and $\alpha: E\to F$ a bundle homomorphism whose restriction near infinity is an isomorphism, cf. \cite{MA67}. Moreover, we can choose connections over the bundles $E$ and $F$ such that the forms $\Ch_\bullet(E) $ and $\Ch_\bullet(F)$ coincide near infinity. Under this assumption, one can write down an explicit formula for the index pairing map:
\begin{equation*}
\ind_{[D]}([E, F, \alpha])= \int_{M} \omega_{D} \wedge \left[\Ch_\bullet(E)-\Ch_\bullet(F) \right]. 
\end{equation*}
Here $\omega_D\in H^{\textup{even}}_{\textup{dR}}(M\backslash \partial M)$ is the dual of the Chern character of the $K$-homology class $[D]$, as explained in the introduction of Chap. I in \cite{AC85}. In the case where $M$ is a spin manifold and $D$ the Dirac operator over $M$, one has $\omega_D = \ahat(M)$. 

Similarly, in the odd case, an element in $K^1(M\setminus \partial M)$ consists of two unitaries $U$ and $V$ over $M\setminus \partial M$ and a homotopy $h$ between $U$ and $V$ near infinity. Moreover, we can assume that $U$ and $V$ are identical near infinity and the homotopy $h$ becomes the identity map near infinity, cf. e.g.\cite[Prop. 4.3.14]{NH-JR00}, in which case the index pairing map has the following cohomological expression \footnote{We adopt the negative sign here in order to be consistent with our sign convention throughout the article.}:
\begin{equation*}\label{Eq:orip}
\ind_{[D]}( [V, U, h]) = -\int_{M} \omega_{D} \wedge \left[\Ch_\bullet(V)-\Ch_\bullet(U) \right]. 
\end{equation*}

Note that the boundary data are conspicuously absent in the above formulas. Indeed, by definition, $K^\bullet(M\backslash \partial M)$ is essentially the (reduced) $K$-group of the one point compactification of $M\backslash \partial M$. The information from the boundary is therefore completely eliminated from the picture. In order to recover that, we shall turn to the relative $K$ theory of the pair $(M, \partial M)$, denoted  $K^\bullet(M, \partial M)$, cf. \cite{L-M-P09}. A relative $K$-cycle $[p, q, h_s]\in K^0(M, \partial M)$ is a triple where $p, q \in M_n(C^\infty(M))$ are two projections over $M$ and $h_s \in M_n(C^\infty(\partial M))$, $s\in [0,1]$, is a path of projections over $\partial M$ such that $h_0 = p|_{\partial M}$ and $h_1 = q|_{\partial M}$. Similarly,  a relative $K$-cycle $[U, V, u_s]\in K^1(M, \partial M)$ is a triple where $U, V\in U_n(C^\infty(M))$ are two unitaries over $M$ with $u_s \in U_n(C^\infty(\partial M))$, $s\in [0, 1]$, a smooth path of unitaries over $\partial M$ such that $u_0 = U|_{\partial M} $ and $u_1 = V|_{\partial M}$. First notice that $K^\bullet(M, \partial M)\cong K^\bullet(M\setminus \partial M) $. Hence the above index pairing induces a map $\ind_{[D]}: K^\bullet(M, \partial M) \to \mathbb Z$. The issue now is to find an explicit formula which incorporates geometric information of the boundary. For even dimensional manifolds with boundary, this is done by Lesch, Moscovici and Pflaum\cite[Theorem $7.6$]{L-M-P09}. We shall give an analogous formula for odd dimensional manifolds with boundary in the Section $ \ref{sec:dz}$.

\section{Cup Product in K-theory}\label{sec:ep}

Let $\mathcal A$  and  $ \B$  be local Fr\'echet algebras. The cup product between $K_1(\A)$ and $K_1(\B)$ is defined by
\[\times :  K_1(\B) \otimes K_1(\A) = K_0(S\B) \otimes K_0(S\A) \to K_0(S\B \otimes S\A) \cong K_0(\B\otimes \A) \] 
where $S\A$ (resp. $S\B $)  is the suspension of $\A$ (resp. $S\B$), the isomorphism is the Bott isomorphism and
\[K_0(S\B) \otimes K_0(S\A) \to K_0(S\B \otimes S\A) \] is given by 
\begin{equation}\label{Eq:ep}
	 [p] \times [q] = [p\otimes q].
\end{equation}

In the case where $\B = C^\infty(\mathbb S^1)$, we shall give an explicit formula of this cup product. Since $e^{2\pi i \theta}$ is a generator of $K_1(C^\infty(\mathbb S^1)) \cong \mathbb Z$, it suffices to give this formula for $[e^{2\pi i \theta}] \times [U] $ with $U\in U_k(\A)$.


\begin{lemma}[cf. \cite{TL88}]\label{Lemma:ep} With the above notation,
\[	
[e^{2\pi i \theta}] \times [U] = [e_{U}]
\]
where $e_U =\begin{pmatrix} f & g+ h U \\ h U^\ast +g & 1-f \end{pmatrix} \in M_{2k}(C^\infty(\mathbb S^1)\otimes \mathcal A)$ is a projection with $f, g$ and $h$ nonnegative functions on $\mathbb S^1$ satisfying the following conditions

\begin{enumerate}
	\item $0\leq f \leq 1$,
	\item $f(0) = f(1) = 1$ and $f(1/2) = 0$,
	\item $g = \chi_{[0, 1/2]} (f - f^2)^{1/2}$ and $h = \chi_{[1/2, 1]}(f- f^2)^{1/2}$.
\end{enumerate}
	
\end{lemma}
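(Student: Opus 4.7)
The proof falls into two parts: an algebraic verification that $e_U$ is a projection, followed by a $K$-theoretic identification of $[e_U]$ with the cup product.

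First, I would check $e_U^\ast = e_U$ and $e_U^2 = e_U$ by direct computation. Self-adjointness is immediate from the reality of $f, g, h$ together with $(g + hU)^\ast = g + hU^\ast$. For idempotency, the two key identities are: (i) $g(\theta)h(\theta) = 0$ for every $\theta \in \mathbb S^1$, since $\supp g \subset [0, 1/2]$ and $\supp h \subset [1/2, 1]$ meet only at $\theta = 1/2$, where $f(1/2) = 0$ forces $(f - f^2)^{1/2}$ to vanish; and (ii) $g^2 + h^2 = f - f^2$ pointwise on $\mathbb S^1$ (using $\chi^2 = \chi$ for indicator functions and $f(1/2) = 0$). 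Using $UU^\ast = U^\ast U = I$ and the fact that the scalar functions $g, h$ commute with $U$, the $(1,1)$-block of $e_U^2$ becomes
\[
f^2 + (g + hU)(hU^\ast + g) = f^2 + g^2 + h^2 + gh(U + U^\ast) = f^2 + (f - f^2) = f,
\]
and the remaining three blocks are checked by analogous manipulations.

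Next, to identify $[e_U]$ with $[e^{2\pi i \theta}] \times [U]$, I would exploit the split short exact sequence $0 \to S\A \to C^\infty(\mathbb S^1) \otimes \A \to \A \to 0$ given by evaluation at $\theta = 0$, which yields
\[
K_0(C^\infty(\mathbb S^1) \otimes \A) \cong K_0(\A) \oplus K_1(\A).
\]
The $K_0(\A)$-component of $[e_U]$ is read off from $e_U(0) = \begin{pmatrix} I_k & 0 \\ 0 & 0 \end{pmatrix}$, contributing the constant rank-$k$ term, which matches the corresponding component of the cup product coming from the unit class in $K_0(C^\infty(\mathbb S^1))$. For the $K_1(\A)$-component, I would present $e_U$ as a clutched bundle: decomposing $\mathbb S^1 = [0, 1/2] \cup [1/2, 1]$, one factors $e_U = V V^\ast$ via the local isometric frames
\[
V_L(\theta) = \begin{pmatrix} \sqrt{f(\theta)}\, I_k \\ \sqrt{1 - f(\theta)}\, I_k \end{pmatrix}, \qquad V_R(\theta) = \begin{pmatrix} \sqrt{f(\theta)}\, I_k \\ \sqrt{1 - f(\theta)}\, U^\ast \end{pmatrix},
\]
defined on $[0, 1/2]$ and $[1/2, 1]$ respectively. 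At $\theta = 0 \equiv 1$, both reduce to $\begin{pmatrix} I_k \\ 0 \end{pmatrix}$ and so agree, while at $\theta = 1/2$ they differ by right multiplication by $U^\ast$. Hence $e_U$ represents the vector bundle obtained by clutching the trivial rank-$k$ bundle over $\mathbb S^1$ via $U^\ast$ at a single point---which is the standard clutching description of the image of $[U]$ under the Bott/suspension isomorphism $K_1(\A) \xrightarrow{\sim} K_0(S\A)$, followed by the inclusion $K_0(S\A) \hookrightarrow K_0(C^\infty(\mathbb S^1) \otimes \A)$. By the definition of the cup product this is precisely $[e^{2\pi i \theta}] \times [U]$.

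The principal obstacle is the careful bookkeeping of orientation and sign conventions---the direction of the clutching at $\theta = 1/2$, the sign in the Bott map, and the absorption of the constant rank-$k$ summand $k[1_\A] \in K_0(\A)$. A convenient shortcut that bypasses these subtleties is to observe that Loring's argument in \cite{TL88} for $\A = C^\infty(\mathbb S^1)$ is purely algebraic: it uses only $UU^\ast = U^\ast U = I$ together with the scalar identities $gh = 0$ and $g^2 + h^2 = f - f^2$, and hence transfers verbatim to an arbitrary unital local Fr\'echet algebra $\A$.
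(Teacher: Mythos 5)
Your proposal is correct in substance, and your computations check out (self-adjointness, the identities $gh=0$ and $g^2+h^2=f-f^2$, and the local frames $V_L$, $V_R$ with $V_LV_L^\ast$, $V_RV_R^\ast$ recovering $e_U$ on the two half-circles), but the route differs from the paper's in an instructive way. The paper never verifies the projection property separately and never invokes a clutching picture or the splitting $K_0(C^\infty(\mathbb S^1)\otimes\mathcal A)\cong K_0(\mathcal A)\oplus K_1(\mathcal A)$: it identifies the cup product with the suspension isomorphism $\Theta_{\mathcal A}\colon K_1(\mathcal A)\to K_0(S\mathcal A)\subset K_0(C^\infty(\mathbb S^1)\otimes\mathcal A)$, recalls that the inverse of $\Theta_{\mathcal A}$ sends a normalized loop of projections $p(\theta)=u(\theta)p(1)u(\theta)^\ast$ with $u(0)=1$ and $p(0)=p(1)=\operatorname{diag}(1,0)$ to the class of the block $v$ in $u(1)=\operatorname{diag}(v,w)$, and then exhibits one explicit unitary path $\mathcal U(\theta)$ with $\mathcal U(0)=I$, $\mathcal U(1)=\operatorname{diag}(U,-U^\ast)$ and $e_U(\theta)=\mathcal U(\theta)\operatorname{diag}(1,0)\mathcal U(\theta)^\ast$. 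That single identity gives the projection property for free and, more importantly, settles exactly the point you defer: whether your clutching by $U^\ast$ at $\theta=1/2$ represents $[U]$ or $-[U]$ is precisely the orientation/convention issue you call the principal obstacle, and outsourcing it to \cite{TL88} (which treats a specific algebra, with its own sign conventions for the Bott map) does not by itself close it for the product as defined in this paper. The good news is that your own data already finishes the argument without that appeal: your frames are, up to the harmless replacement $V_R\mapsto V_RU$, exactly the first block-column of $\mathcal U$, so instead of clutching you can complete $V_L$ and $V_RU$ to the global unitary path $\mathcal U$ starting at the identity and read off $\Theta_{\mathcal A}^{-1}[e_U]=[U]$ directly, which is the paper's proof; the auxiliary splitting is then unnecessary, since $e_U(0)=e_U(1)=\operatorname{diag}(1,0)$ already places the relevant difference class in $K_0(S\mathcal A)$.
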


\begin{proof}
It is not difficult to see that 
\[\times: K_1(C^\infty(\mathbb S^1)) \otimes K_1(\mathcal A) \to K_0(C^\infty(\mathbb S^1) \otimes \mathcal A)\]
 is the same as the standard isomorphism \cite[Section 7.2]{WO93}  
 \[\Theta_{\A} : K_1(\A) \to K_0(S\A) \subset K_0(C^\infty(\mathbb S^1)\otimes \A)\]
after identifying $K_1(C^\infty(\mathbb S^1))$ with $ \mathbb Z$. The inverse of this map is constructed as follows, cf. \cite[Proposition 4.8.2]{NH-JR00}\cite[Section 7.2]{WO93}. The group $K_0(S\A)$ is generated by formal differences of normalized loops of projections over $\A$. Such a loop is a projection-valued maps $p : [0,1] \to M_n(\A)$ with $p(0) = p(1)\in M_n(\mathbb C)$. For each loop, there is a path of unitaries $u : [0, 1] \to U_n(\A)$ such that $p(t) = u(t)p(1)u(t)^\ast$ and $u(0) = 1_n$.Without loss of generality, we can assume $p(0) =p(1) = \begin{pmatrix} 1 & 0 \\ 0 & 0\end{pmatrix}$. This implies that $u(1)$ is of the form $ \begin{pmatrix} v & 0 \\ 0 & w \end{pmatrix}$. Then one checks that $[p] \mapsto [v]$ is a well-defined inverse to $\Theta_{\A}$. 

To see that our formula agrees with the usual definition, it suffices to show that $\Theta_{\A}^{-1}(e_U) = U  $. First notice that $e_U(0) = e_U(1) = \begin{pmatrix} 1 & 0 \\ 0 & 0\end{pmatrix}$ and $e_U(\theta)$ is a projection over $\A$ for each $\theta\in \mathbb S^1 = \mathbb R/\mathbb Z$, hence $e_U$ is a normalized loop of projections. Now consider the following path of unitaries over $\A$, 
\[ \mathcal U(\theta) = \begin{pmatrix} f_1(\theta) + f_2(\theta) U & (1-f)^{1/2}(\theta) \\ (1-f)^{1/2}(\theta)  & -f_1(\theta) - f_2(\theta) U^\ast \end{pmatrix}\]
where $f_1= \chi_{[0, 1/2]} f^{1/2}$ and $ f_2 = \chi_{[1/2, 1]} f^{1/2}$. In particular, $\mathcal U(0) = \begin{pmatrix} 1 & 0 \\ 0 & 1\end{pmatrix}$ and $\mathcal U(1) = \begin{pmatrix} U & 0 \\ 0 & -U^\ast\end{pmatrix}$.
By a direct calculation, one verifies
\[   
e_U (\theta) = \mathcal U(\theta) \begin{pmatrix} 1 & 0 \\ 0 & 0\end{pmatrix} \mathcal U(\theta)^\ast
\]
from which the lemma follows.

\end{proof}

We will also make use of the following lemma in Section $ \ref{sec:chern}$, cf. \cite[Lemma $2.2$]{TL88}.  
\begin{lemma} \label{Lemma:id}
For $f, g$ and $h$ nonnegative functions on $\mathbb S^1 = \mathbb R/\mathbb Z$ satisfying the following conditions
    \begin{enumerate}
		\item $0\leq f \leq 1$,
		\item $f(0) = f(1) = 1$ and $f(1/2) = 0$,
		\item $g = \chi_{[0, 1/2]} (f - f^2)^{1/2}$ and $h = \chi_{[1/2, 1]}(f- f^2)^{1/2}$,
	\end{enumerate}
we have 
\[ \int_0^1 \left[(2-4f)h'h^{2k-1} + 4f'h^{2k} \right] d\theta = \frac{(k-1)! (k-1)!}{(2k-1)!}\]
\end{lemma}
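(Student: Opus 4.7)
My plan is to reduce the integral to a Beta integral by exploiting the identities relating $f$ and $h$ that follow from the definition. The key point is that $h$ is supported in $[1/2,1]$, and on that interval $h^2 = f - f^2$, so the whole integrand can be expressed as a single function of $f$ times $f'$.

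First, since $h$ vanishes on $[0,1/2]$, only the contribution from $[1/2,1]$ survives. On $[1/2,1]$, differentiating $h^2 = f - f^2$ gives $2hh' = (1-2f)f'$, and therefore
\[
(2-4f)\, h' h^{2k-1} = 2(1-2f)(h'h)\, h^{2k-2} = (1-2f)^2 f'\, h^{2k-2}.
\]
Since $h^{2k-2} = f^{k-1}(1-f)^{k-1}$ and $h^{2k} = f^k(1-f)^k$, the integrand becomes
\[
\bigl[(1-2f)^2 f^{k-1}(1-f)^{k-1} + 4 f^k(1-f)^k\bigr] f'
= f^{k-1}(1-f)^{k-1} f',
\]
where the last equality uses the elementary identity $(1-2t)^2 + 4t(1-t) = 1$.

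Now I would finish by a change of variable (or equivalently, an antiderivative argument). Setting $G(t) = \int_0^t s^{k-1}(1-s)^{k-1}\,ds$, the integrand equals $\tfrac{d}{d\theta}G(f(\theta))$ on $[1/2,1]$, so
\[
\int_0^1 \bigl[(2-4f) h' h^{2k-1} + 4 f' h^{2k}\bigr] d\theta = G(f(1)) - G(f(1/2)) = G(1) - G(0) = B(k,k) = \frac{(k-1)!\,(k-1)!}{(2k-1)!},
\]
which is the desired value. Note that this argument is insensitive to whether $f$ is monotone on $[1/2,1]$, since the integrand has been exhibited as an exact derivative; only the boundary values $f(1/2)=0$ and $f(1)=1$ matter.

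No step here is really an obstacle; the only mild subtlety is regularity of $h$ at the endpoints $1/2$ and $1$, where $h$ behaves like a square root. However, $h$ is absolutely continuous and the computation above is valid almost everywhere, which is all that the integral requires.
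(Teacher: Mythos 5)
Your proof is correct. It differs from the paper's in a small but genuine way: you collapse the whole integrand into a single exact derivative, using the pointwise relation $2hh'=(1-2f)f'$ on $[1/2,1]$ together with the algebraic identity $(1-2t)^2+4t(1-t)=1$, so that the integral becomes $\int_{1/2}^{1}\frac{d}{d\theta}G(f(\theta))\,d\theta = B(k,k)=\frac{(k-1)!\,(k-1)!}{(2k-1)!}$ in one stroke. The paper instead evaluates the second term directly, $\int_0^1 f'h^{2k}\,d\theta=\int_0^1(x-x^2)^k\,dx=\frac{k!\,k!}{(2k+1)!}$, and handles the first term by integration by parts, showing it equals $\frac{2}{k}\int_0^1 f'h^{2k}\,d\theta$ (the boundary term vanishes since $h(0)=h(1)=0$), after which a short arithmetic recombination gives the stated value. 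Both arguments use only the boundary values $f(1/2)=0$, $f(1)=1$ and are insensitive to monotonicity of $f$; yours trades the integration by parts for the identity $(1-2t)^2+4t(1-t)=1$ and lands on the Beta integral $B(k,k)$ directly, which is arguably cleaner. On the regularity point you raise: you can sidestep any discussion of $h'$ at the zeros of $h$ by writing $h'h^{2k-1}=\frac{1}{2k}\bigl(h^{2k}\bigr)'$ with $h^{2k}=(f-f^2)^k$ smooth on $[1/2,1]$ (and in the intended application $f$ is chosen so that $g,h$ are smooth, since $e_U$ must lie in $M_{2k}(C^\infty(\mathbb S^1)\otimes\mathcal A)$), so the almost-everywhere/absolute-continuity caveat is not really needed.
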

\begin{proof}
	Notice that
	\[\int_0^1 f'h^{2k} d\theta = \int_{1/2}^1 (f(\theta)-f^2(\theta))^k df(\theta) = \int_0^1 (x-x^2)^k dx = \frac{k! k!}{(2k+1)!},\]
and integration by parts gives 
	\[\int_0^1 (2-4f)h'h^{2k-1} d\theta = \frac{2}{k} \int_{0}^1  f'h^{2k} d\theta.\]
\end{proof}

\section{Chern Characters and Transgression Formulas}\label{sec:chern}

Throughout this section, although we deal with commutative algebras, we shall use the similar formalism for the Chern character in $K$-theory as in cyclic homology \cite[Chap. II]{AC85}, \cite[Chap. VIII]{JL92}. 
Let $M$ be a compact smooth manifold with or without boundary. The even (resp. odd) Chern characters of projections (resp. unitaries) in $M_n(C^\infty(M))$ can be expressed as follows.  
	For $p\in M_n(C^\infty(M))$ such that $p^2 = p $ and $p^\ast = p$,  
		\begin{equation}\Ch_\bullet(p) := \tr(p) + \sum_{k=1}^{\infty}(-1)^{k}\frac{1}{(2\pi i)^{k}}\frac{1}{ k!} \tr\left( p (dp)^{2k}\right) \in H^{\even}_{\textup{dR}}(M).
		\end{equation}
	For $U \in U_n(C^\infty(M))$, 
	\begin{equation}\label{eq:oddchern}
	\Ch_\bullet(U) := \sum_{k=0}^\infty \frac{1}{(2\pi i)^{k+1}}\frac{k!}{(2k+1)!} \tr\left( (U^{-1}dU)^{2k+1}\right) \in H^{\odd}_{dR}(M).
	\end{equation} 







For each $U\in U_n(C^\infty(M))$, let $e_U$ be the projection as in Lemma $\ref{Lemma:ep}$. If no confusion is likely to arise, we also write $e$ instead of $e_U$.

\begin{lemma}\label{Lemma:ch}
	\[\Ch_\bullet(e_U) = -\sum_{k=1}^{\infty}\frac{1}{(2\pi i)^{k}}\frac{k}{k!} \left(4f'h^{2k}+ (2-4f)h'h^{2k-1}\right)d\theta \wedge\tr (U^{-1} \cdot dU)^{2k-1} \]
\end{lemma}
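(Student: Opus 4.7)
The plan is to compute $\tr(e_U(de_U)^{2k})$ by a direct block-matrix calculation and then assemble the Chern character from its defining series.

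First I would exploit the disjoint supports of $g$ and $h$: since $g = \chi_{[0,1/2]}(f-f^2)^{1/2}$ and $h = \chi_{[1/2,1]}(f-f^2)^{1/2}$, all mixed products $gh$, $gh'$, $g'h$ vanish. On the subinterval $[0,1/2]$ one has $h\equiv 0$, so $e_U = \begin{pmatrix} f & g \\ g & 1-f\end{pmatrix}$ is $U$-independent and every entry of $de_U$ is a scalar multiple of $d\theta$. Hence $(de_U)^2 = 0$ there, which contributes nothing beyond the degree-zero piece of $\Ch_\bullet(e_U)$. The whole calculation thus reduces to the interval $[1/2,1]$, where $g\equiv 0$ and $e_U = \begin{pmatrix} f & hU \\ hU^{*} & 1-f\end{pmatrix}$.

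On this interval I would write $de_U = A\,d\theta + B$ with
\[ A = \begin{pmatrix} f' & h'U \\ h'U^{*} & -f'\end{pmatrix}, \qquad B = \begin{pmatrix} 0 & h\,dU \\ h\,dU^{*} & 0\end{pmatrix}. \]
Using $(d\theta)^2 = 0$ and the sign rule for two 1-forms one checks $(de_U)^2 = d\theta\wedge [A,B] + B^2$. A key observation is that any product of matrix-valued forms containing two factors of $d\theta$ can be rewritten with the two $d\theta$'s adjacent (at the cost only of a sign) and therefore vanishes. So expanding the $k$-th power keeps only
\[ (de_U)^{2k} = B^{2k} + \sum_{j=0}^{k-1} B^{2j}\bigl(d\theta\,[A,B]\bigr)B^{2(k-1-j)}. \]

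Next I would evaluate the two families. A short computation gives $B^{2m} = (-1)^m h^{2m}\,\mathrm{diag}(U\omega^{2m}U^{*},\,\omega^{2m})$ with $\omega = U^{-1}dU$, and the graded cyclic identity $\tr(\alpha\beta) = (-1)^{|\alpha||\beta|}\tr(\beta\alpha)$ forces $\tr(\omega^{2m})=0$; consequently $\tr(e_U B^{2k}) = 0$. For the middle family I would multiply out the blocks explicitly, using $U^{*}dU = \omega$, $(dU)U^{*} = -U\,dU^{*}$, and cyclicity of $\tr$. Each summand turns out to be independent of $j$ and equals $2(-1)^{k-1}h^{2(k-1)}\bigl(2f'h^2 + (1-2f)hh'\bigr)\,d\theta\wedge\tr(\omega^{2k-1})$; summing on $j$ introduces the factor $k$, and combining with the constants $(-1)^k/\bigl((2\pi i)^k k!\bigr)$ from the definition of $\Ch_\bullet$ reproduces the stated formula.

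The hard part is controlling signs in the graded-commutative algebra of matrix-valued forms, particularly when commuting $d\theta$ past the 1-form matrices $B$ and $[A,B]$, and when applying graded cyclicity inside the trace. The two identities $\tr(\omega^{2m})=0$ and $(d\theta)^2=0$ are what make the final answer collapse into the single closed-form expression displayed in the lemma.
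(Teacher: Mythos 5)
Your proposal is correct and follows essentially the same route as the paper: expand $\tr\bigl(e_U(de_U)^{2k}\bigr)$ by splitting $de_U$ into its $d\theta$-part and its $dU$-part, discard all terms with two $d\theta$ factors, kill the pure $dU$-term via $\tr\bigl((U^{-1}dU)^{2k}\bigr)=0$, and evaluate the remaining block traces to get the stated coefficient (your per-summand value and the resulting factor $k$ agree with the paper's computation). The only cosmetic differences are your preliminary reduction to the interval $[1/2,1]$ using the disjoint supports of $g$ and $h$, and your grouping of the single-$d\theta$ terms through $(de_U)^2=B^2+d\theta\,[A,B]$ instead of inserting the $d\theta$-factor at each of the $2k$ positions, which are equivalent bookkeeping choices.
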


\begin{proof}
Notice that 
	\begin{align*}
	 de & =  \begin{pmatrix} f' & g' + h' U \\ h' U^\ast +g' & -f' \end{pmatrix}d\theta +\begin{pmatrix} 0 & h \ dU \\ h\ dU^\ast & 0 \end{pmatrix},
	\end{align*}
which implies
\begin{align} 
	\tr(e(de)^{2k}) &=  \tr \left( \begin{pmatrix} f& g + h U \\ h U^\ast +g & 1-f \end{pmatrix} \begin{pmatrix} 0 & h\ dU \\ h\  dU^\ast  & 0\end{pmatrix}^{2k} \right) \label{Eq:wot}\\
	& + \sum_{j=1}^{j=2k} \tr \left( \begin{pmatrix} f& g + h U \\ h U^\ast +g & 1-f \end{pmatrix} \begin{pmatrix} 0 & h\ dU \\ h\  dU^\ast  & 0\end{pmatrix}^{j-1} \right. \notag\\
	& \quad \left.  \begin{pmatrix} f' & g'+h' U \\g'+ h' U^\ast  & -f'\end{pmatrix} d\theta \begin{pmatrix} 0 & h\ dU \\ h\ dU^\ast  & 0\end{pmatrix}^{(2k-j)}  \right). \label{Eq:wt}
\end{align}
Since most of the matrices appearing in the above summation only have off diagonal entries, a straightforward calculation gives the following equalities.
\[  (\ref{Eq:wot})= h^{2k}\tr\left( (U^{-1} \cdot dU)^{2k} \right) \]
\[ 	 (\ref{Eq:wt}) =  -(-1)^{k} k \left((2-4f)h'h^{2k-1} +4f'h^{2k}\right)d\theta\wedge \tr(U^{-1} \cdot dU)^{2k-1}.\]
On the other hand, 
\[\tr ((U^{-1} \cdot dU)^{2k}) = -\tr\left((U^{-1} \cdot dU) (U^{-1} \cdot dU)^{2k-1} \right)\]
from which it follows that $\eqref{Eq:wot}$ vanishes. This finishes the proof.
	
\end{proof}

As a consequence of lemma $(\ref{Lemma:id})$ and lemma $(\ref{Lemma:ch})$, one has the following corollary. From now on,  integration along the fiber $\mathbb S^1$ will be denoted by $\pi_\ast$. 
\begin{corollary}\label{cor:ch} 
$\pi_\ast\Ch_\bullet(e_U) = - \Ch_\bullet (U). $ 
\end{corollary}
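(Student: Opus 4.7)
The plan is to combine Lemma \ref{Lemma:ch} and Lemma \ref{Lemma:id} directly. Lemma \ref{Lemma:ch} expresses $\Ch_\bullet(e_U)$ as a sum over $k\geq 1$ of terms each of which is a product of a $d\theta$-factor (depending only on $f,h$ and their $\theta$-derivatives) with a pulled-back form $\tr(U^{-1}dU)^{2k-1}$ on $Y$. Since integration along the fiber $\pi_*$ only sees the $d\theta$-component, it commutes with the wedge against $\tr(U^{-1}dU)^{2k-1}$, and the entire computation reduces to evaluating, for each $k$, the scalar integral
\[
\int_0^1 \bigl[(2-4f)h'h^{2k-1} + 4f'h^{2k}\bigr]\,d\theta.
\]

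First I would apply $\pi_*$ term-by-term to the series in Lemma \ref{Lemma:ch}. By Lemma \ref{Lemma:id}, the above scalar integral equals $\frac{(k-1)!(k-1)!}{(2k-1)!}$, so we obtain
\[
\pi_*\Ch_\bullet(e_U) = -\sum_{k=1}^\infty \frac{1}{(2\pi i)^k}\cdot \frac{k}{k!}\cdot\frac{(k-1)!(k-1)!}{(2k-1)!}\,\tr(U^{-1}dU)^{2k-1}.
\]

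Next I would simplify the combinatorial coefficient: since $k/k! = 1/(k-1)!$, the factor $\tfrac{k}{k!}\cdot\tfrac{(k-1)!(k-1)!}{(2k-1)!}$ collapses to $\tfrac{(k-1)!}{(2k-1)!}$. Finally I would reindex by setting $j=k-1$, turning the sum into
\[
-\sum_{j=0}^\infty \frac{1}{(2\pi i)^{j+1}}\cdot\frac{j!}{(2j+1)!}\,\tr(U^{-1}dU)^{2j+1},
\]
which is exactly $-\Ch_\bullet(U)$ by \eqref{eq:oddchern}.

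There is no real obstacle here; the argument is purely mechanical once the two previous lemmas are available. The only place to be careful is the bookkeeping of signs and factorials (in particular that $\pi_*$ introduces no extra sign since the orientation of $\mathbb{S}^1$ is the standard one and the $d\theta$ sits on the left of the wedge in Lemma \ref{Lemma:ch}), and the index shift $k\mapsto j+1$ that aligns the series with the standard normalization of the odd Chern character.
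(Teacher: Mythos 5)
Your proposal is correct and follows exactly the route the paper intends: the paper states the corollary as an immediate consequence of Lemma \ref{Lemma:ch} and Lemma \ref{Lemma:id}, and your term-by-term fiber integration, the coefficient simplification $\frac{k}{k!}\cdot\frac{(k-1)!(k-1)!}{(2k-1)!}=\frac{(k-1)!}{(2k-1)!}$, and the reindexing $k=j+1$ matching \eqref{eq:oddchern} are precisely the mechanical steps left implicit there.
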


Consider a smooth path of unitaries $U_s\in U_n(C^\infty(M))$ with $s\in [0, 1]$, or equivalently $\mathfrak U\in U_n(C^\infty([0, 1]\times M))$. The secondary Chern character $\sch_\bullet(U_s)$ is given by the formula
	\[ \sch_\bullet(U_s) := \sum_{k=0}^{\infty}(-1)^k\frac{1}{(2\pi i)^{k+1}}\frac{k!}{(2k)!}\tr\left(U_s^{-1}\dot{U_s} (U_sdU^{-1})^{2k}\right). \]
Then $\Ch_\bullet(\mathfrak U)$ can be decomposed as 
\[ \Ch_\bullet(\mathfrak U) = \Ch_\bullet(U_s) + ds\wedge \sch_\bullet(U_s) \] 
where $\Ch(U_s)$ (see $\eqref{eq:oddchern}$ above)  and $\sch_\bullet(U_s)$ do not contain $ds$. Applying de Rham differential to both sides gives us the following transgression formula
\[ \frac{\partial}{\partial s} \Ch_\bullet(U_s) = d\sch_\bullet(U_s). \]
Similarly, if $e_s \in M_m(C^\infty(M)) $ is a smooth path of projections, or equivalently a projection $\mathfrak e \in M_m(C^\infty([0,1]\times M))$, then
\[ \Ch_\bullet(\mathfrak e) = \Ch_\bullet (e_s) + ds\wedge \sch_\bullet(e_s). \] 
with
\begin{align*}
	&\sch_\bullet (e_s) := \sum_{k=0}^\infty(-1)^{k+1}\frac{1}{(2\pi i)^{k+1}}\frac{1}{k!}\tr\left((2e_s-1)\dot e_s(de_s)^{2k+1}\right).
\end{align*}
Applying Corollary $(\ref{cor:ch})$ to $\Ch_\bullet(\mathfrak U)$ and $\Ch_\bullet(\mathfrak {e_U})$, one has
\[ \pi_\ast\Ch_\bullet(\mathfrak {e_U}) = -  \Ch_\bullet(\mathfrak U), \]
which implies 
\[ ds\wedge\pi_\ast\sch_\bullet(e_s ) = ds\wedge \sch_\bullet(U_s). \]
Denote the Chern-Simons transgression forms by
\[ \tch_\bullet(e_s)_{0\leq s\leq 1} := \int_0^1 ds\wedge \sch_\bullet(e_s),\] 
\[  \tch_\bullet\left(U_s\right)_{0\leq s\leq 1} := \int_0^1 ds\wedge \sch_\bullet(U_s). \]
We summarize the results of this section in the following proposition.
\begin{proposition}\label{prop:product}
Consider $U\in U_n(C^\infty(M))$ and $U_s\in U_n(C^\infty(M))$ for $ s\in [0,1]$. Let $e$, resp.  $e_s$, be the cup product of $U$, resp. $U_s$, with $e^{2 \pi i \theta}$ a generator of $K^1(\mathbb S^1)$ as in Lemma $\ref{Lemma:ep}$. Then 
\[   \pi_\ast\Ch_\bullet(e ) = - \Ch_\bullet(U) \]
and
\[  \pi_\ast\tch_\bullet(e_s )_{0\leq s\leq 1} = \tch_\bullet(e_s)_{0\leq s\leq 1}. \]

\end{proposition}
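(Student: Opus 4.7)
The first identity $\pi_\ast \Ch_\bullet(e) = -\Ch_\bullet(U)$ is the statement of Corollary \ref{cor:ch} and requires no further argument. The plan for the second identity is to repackage the one-parameter family as a single object on $[0,1]\times M$ and then apply Corollary \ref{cor:ch} on this enlarged base, letting the $ds$-component do the bookkeeping.

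Concretely, I would regard the smooth path $(U_s)_{0\le s\le 1}$ as a single unitary $\mathfrak U\in U_n(C^\infty([0,1]\times M))$. The cup-product construction of Lemma \ref{Lemma:ep}, performed with $\mathfrak U$ in place of $U$, then produces a projection $\mathfrak e\in M_{2n}(C^\infty([0,1]\times \mathbb S^1\times M))$ whose slice over $\{s\}\subset[0,1]$ is precisely $e_s$. Applying Corollary \ref{cor:ch} to $\mathfrak U$ yields
\[
\pi_\ast \Ch_\bullet(\mathfrak e)= -\Ch_\bullet(\mathfrak U),
\]
where $\pi_\ast$ denotes fiber integration along the $\mathbb S^1$ factor.

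Next I would split both sides according to whether $ds$ appears, using the transgression decompositions $\Ch_\bullet(\mathfrak U)=\Ch_\bullet(U_s)+ds\wedge\sch_\bullet(U_s)$ and $\Ch_\bullet(\mathfrak e)=\Ch_\bullet(e_s)+ds\wedge\sch_\bullet(e_s)$. Because $\pi_\ast$ acts only in the $\mathbb S^1$ direction and $ds$ is pulled back from $[0,1]$, we have $\pi_\ast(ds\wedge\alpha)=ds\wedge\pi_\ast\alpha$, so matching the $ds$-free components recovers the first identity of the Proposition while matching the $ds$-components produces the pointwise relation between $\pi_\ast\sch_\bullet(e_s)$ and $\sch_\bullet(U_s)$. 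Integrating this pointwise relation in $s$ over $[0,1]$ yields $\pi_\ast\tch_\bullet(e_s)_{0\le s\le 1}=\tch_\bullet(U_s)_{0\le s\le 1}$.

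There is no real obstacle here; the proposition is a formal consequence of Corollary \ref{cor:ch} once the family is interpreted as a unitary on $[0,1]\times M$. The only thing that genuinely needs care is the bookkeeping of signs and orientations: the signs built into the defining series for $\sch_\bullet(U_s)$ and $\sch_\bullet(e_s)$ (note the $(-1)^k$ versus $(-1)^{k+1}$ in the two definitions), together with the orientation convention for fiber integration along $\mathbb S^1$, must combine to turn the minus sign in $\pi_\ast\Ch_\bullet(\mathfrak e)=-\Ch_\bullet(\mathfrak U)$ into the displayed transgression identity without a sign. This is mechanical once the conventions are pinned down.
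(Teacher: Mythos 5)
Your route is the same as the paper's: the paper also packages the path as a single unitary $\mathfrak U\in U_n(C^\infty([0,1]\times M))$, applies Corollary \ref{cor:ch} to $\mathfrak U$ and to the associated projection $\mathfrak e$ over $[0,1]\times\mathbb S^1\times M$, matches $ds$-free and $ds$-components, and integrates in $s$; so the proposal is correct in substance (and you rightly read the right-hand side of the second identity as $\tch_\bullet(U_s)_{0\leq s\leq 1}$, the $e_s$ appearing there being a typo in the statement). The one point you should not leave as ``mechanical'' is the fiber-integration sign, because it is the entire content of the step: with the convention implicit in Lemma \ref{Lemma:ch} and Corollary \ref{cor:ch}, namely $\pi_\ast(d\theta\wedge\beta)=\int_0^1\beta\,d\theta$ with the $d\theta$-factor written in front, one has $\pi_\ast(ds\wedge\alpha)=-\,ds\wedge\pi_\ast\alpha$, not $+\,ds\wedge\pi_\ast\alpha$ as you state. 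It is exactly this minus sign that cancels the minus in $\pi_\ast\Ch_\bullet(\mathfrak e)=-\Ch_\bullet(\mathfrak U)$, giving $ds\wedge\pi_\ast\sch_\bullet(e_s)=ds\wedge\sch_\bullet(U_s)$ and hence $\pi_\ast\tch_\bullet(e_s)_{0\leq s\leq 1}=\tch_\bullet(U_s)_{0\leq s\leq 1}$ with no sign; as literally written, your rule combined with Corollary \ref{cor:ch} would instead produce $\pi_\ast\tch_\bullet(e_s)_{0\leq s\leq 1}=-\tch_\bullet(U_s)_{0\leq s\leq 1}$. Replacing your displayed rule by the signed one (or, equivalently, checking the projection formula for a one-dimensional fiber in the chosen convention) closes the gap and reproduces the paper's argument verbatim.
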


\section{Odd Index Theorem on Even Dimensional Manifolds}\label{sec:odd}

In this section, we shall prove our analogue for even dimensional closed manifolds of the APS twisted index theorem.

 Let us first recall the APS twisted index theorem and fix some notation. Let $N$ be a closed odd dimensional spin manifold and $\slashed D$ its Dirac operator. If $p$ is a projection in $M_n(C^\infty(N))$, then $p$ induces a Hermitian vector bundle, denoted $E_p$, over $N$. With the Grassmannian connection on $E_p$, let $p(D\otimes I_n)p$ be the twisted Dirac operator with coefficients in $E_p$. For notational simplicity, we also write $p D p$ instead of $p(D\otimes I_n)p$. 
Then by \cite[Corollary $7.9$]{L-M-P09}, for  $p_s\in M_k(C^\infty(N))$ a smooth path of projections over $N$, one has
\begin{equation}\label{Eq:gaps}
	 \reta(p_1 \slashed D p_1) -\reta(p_0 \slashed Dp_0) = \int_N \ahat(N) \wedge \tch_\bullet(p_s) + \sflow (p_s\slashed D p_s)_{0\leq s \leq 1} 
\end{equation}
where \[\reta(p_i\slashed D p_i)  = \frac{\eta(p_i \slashed D p_i) + \dim\ker(p_i \slashed D p_i)}{2}\] the reduced eta invariant of $p_i \slashed D p_i$. Here $\sflow (p_s\slashed D p_s)_{0\leq s \leq 1}$ is the spectral flow of $(p_s \slashed D p_s)_{0\leq s\leq 1}$. Notice that the vector bundle on which $p_s \slashed D p_s$ acts may vary as $s$ moves along $[0, 1]$. To make sense of the definition of such a spectral flow, we introduce a path of unitaries $u_s\in U_n(C^\infty(N))$ over $N$ with $u_s p_0 u_s^\ast = p_s$ so that $p_0 u_s^\ast \slashed D u_s p_0$ acts on the same vector bundle $E_{p_0}$. $ \sflow (p_s\slashed D p_s)_{0\leq s \leq 1} $ is then defined to be $\sflow (p_0 u_s^\ast\slashed D u_s p_0)_{0\leq s \leq 1}$ the spectral flow of the family $(p_0 u_s^\ast \slashed D u_s p_0)_{0\leq s \leq 1}$.  Now by \cite[Lemma 3.4]{PK-ML04}, formula $\eqref{Eq:gaps}$ is equivalent to
\begin{equation}\label{Eq:vareta} 
	\int_0^1 \frac{1}{2} \frac{d}{ds}  \eta(p_s\slashed D p_s)ds  = \int_N \ahat(N) \wedge \tch_\bullet(p_s).
\end{equation}


\begin{theorem}\label{Thm:main}
Let $Y$ be a closed even dimensional spin manifold and $(U_s)_{0\leq s \leq 1} \in U_k(C^\infty(Y))$ a smooth path of unitaries over $Y$. For $s\in [0, 1]$, let $e_s\in M_{2k}(C^\infty(Y))$ be the projection defined as the cup product of $U_s$ with the generator $e^{2\pi i \theta}$ of $K^1(\mathbb S^1)$. Let $D_{\mathbb S^1\times Y}$ be the Dirac operator over $\mathbb S^1\times Y$. Then
\begin{equation}
  \int_0^1 \frac{1}{2}\frac{d}{ds} \eta(e_s D_{\mathbb S^1\times Y} e_s)ds  = \int_Y \ahat(Y) \wedge \tch_\bullet(U_s).
\end{equation}
\end{theorem}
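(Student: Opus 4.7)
The plan is to reduce the statement to the APS twisted index theorem in the form $\eqref{Eq:vareta}$ on the odd dimensional closed spin manifold $N = \mathbb{S}^1 \times Y$, using the explicit fiber-integration formula from Proposition $\ref{prop:product}$ to push the computation down to $Y$.

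First, I observe that $\mathbb{S}^1 \times Y$ is a closed odd dimensional spin manifold (taking the product spin structure and the product metric, with $\mathbb{S}^1$ flat), and that by Lemma $\ref{Lemma:ep}$ the family $(e_s)_{0\le s\le 1}$ is a smooth path of projections in $M_{2k}(C^\infty(\mathbb{S}^1 \times Y))$. Applying formula $\eqref{Eq:vareta}$ to this path, with $\slashed D$ replaced by $D_{\mathbb{S}^1 \times Y}$, yields
\begin{equation*}
\int_0^1 \frac{1}{2}\frac{d}{ds}\eta(e_s D_{\mathbb{S}^1 \times Y} e_s)\, ds
= \int_{\mathbb{S}^1 \times Y} \ahat(\mathbb{S}^1 \times Y) \wedge \tch_\bullet(e_s).
\end{equation*}

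Second, I reduce the right hand side to an integral over $Y$. Because $\mathbb{S}^1$ is flat with respect to the chosen product metric, the $\ahat$-genus form of the product splits multiplicatively, so $\ahat(\mathbb{S}^1 \times Y) = \pi^\ast \ahat(Y)$, where $\pi : \mathbb{S}^1 \times Y \to Y$ denotes the projection. The projection formula for integration along the fiber then gives
\begin{equation*}
\int_{\mathbb{S}^1 \times Y} \pi^\ast \ahat(Y) \wedge \tch_\bullet(e_s)
= \int_Y \ahat(Y) \wedge \pi_\ast \tch_\bullet(e_s).
\end{equation*}
Finally, the fiber-integration identity $\pi_\ast \tch_\bullet(e_s) = \tch_\bullet(U_s)$ from Proposition $\ref{prop:product}$ (i.e., the consequence of Corollary $\ref{cor:ch}$ for the path $U_s$) substitutes the right hand side into the desired form, completing the proof.

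Conceptually, the only nontrivial content is the passage from $Y$ (even dimensional) to $\mathbb{S}^1 \times Y$ (odd dimensional), which is exactly what the explicit Powers--Rieffel/Loring cup product construction of Lemma $\ref{Lemma:ep}$ was designed to accomplish; once the data have been lifted, the argument is a direct application of the known odd dimensional APS twisted index theorem combined with fiber integration. I do not anticipate a substantive obstacle: the only thing to verify carefully is that the path of projections $e_s$ on $\mathbb{S}^1 \times Y$ is smooth enough for $\eqref{Eq:vareta}$ to apply (clear from the explicit formula for $e_U$) and that the sign conventions of the Chern--Simons transgression in Section $\ref{sec:chern}$ match those in $\eqref{Eq:vareta}$, which they do by the derivation of the integration-along-the-fiber formula.
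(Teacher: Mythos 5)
Your proposal is correct and follows essentially the same route as the paper: lift to $\mathbb S^1\times Y$, apply the odd dimensional APS twisted index theorem to the path of projections $e_s$, and push the right hand side down to $Y$ via the multiplicativity of $\ahat$ (with $\ahat(\mathbb S^1)=1$) and the fiber-integration identity of Proposition $\ref{prop:product}$. The only cosmetic difference is that you invoke the differentiated form $\eqref{Eq:vareta}$ directly, whereas the paper applies the integrated form $\eqref{Eq:gaps}$ and then converts back using \cite[Lemma 3.4]{PK-ML04}; since the paper records the equivalence of the two formulations beforehand, this is the same argument.
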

\begin{proof}
Applying formula $\eqref{Eq:gaps}$ to $\mathbb S^1\times Y$, one has 
\[ \reta(e_1 D_{\mathbb S^1\times Y} e_1) -\reta(e_0 D_{\mathbb S^1\times Y} e_0) = \int_{\mathbb S^1\times Y} \ahat(\mathbb S^1\times Y) \wedge \tch_\bullet(e_s) + \sflow (e_s D_{\mathbb S^1\times Y} e_s). \]
Notice that $\ahat (\mathbb S^1\times M) = \pi_1^\ast\ahat(\mathbb S^1)\wedge \pi_2^\ast\ahat(M)$ and $\ahat(\mathbb S^1) = 1$, where $\pi_1 : \mathbb S^1\times M \to \mathbb S^1$, resp. $\pi_2 : \mathbb S^1\times M \to M$, is the projection from $\mathbb S^1 \times M$ to $\mathbb S^1$, resp. $M$. By Proposition $\ref{prop:product}$, the integral on the right side is equal to $\int_{Y} \ahat(Y) \wedge \tch_\bullet(U_s).$ Now the formula 
\[  \int_0^1 \frac{1}{2}\frac{d}{ds} \eta(e_s D_{\mathbb S^1\times Y} e_s)ds  = \int_Y \ahat(Y) \wedge \tch_\bullet(U_s)\]
follows from the equality \cite[Lemma 3.4]{PK-ML04}
\[ \reta(e_1 D_{\mathbb S^1\times Y} e_1) -\reta(e_0 D_{\mathbb S^1\times Y} e_0) = \sflow (e_s D_{\mathbb S^1\times Y} e_s) + \int_0^1 \frac{1}{2}\frac{d}{ds} \eta(e_s D_{\mathbb S^1\times Y} e_s)ds . \]
\end{proof}

\begin{remark}
Mod $\mathbb Z$, the reduced $\eta$-invariant $\reta(e_s D_{\mathbb S^1\times Y} e_s)$ is equal to the reduced $\eta$-invariant $\reta(Y, U_s)$ defined by Dai and Zhang, cf. \cite[Definition 2.2]{XD-WZ06}, at least when $Y$ bounds. See Theorem $\ref{thm:eta}$ below.      
\end{remark}

\section{Equivalence of Eta Invariants}\label{sec:eta}

Throughout this section, we assume $M$ is an odd dimensional spin manifold with boundary $\partial M$. Denote by $\mathcal S_M$ the spinor bundle over $M$. Let  $D$ be the Dirac operator over $M$, then near  the boundary
\[ D = \cl(d/dx) \left( \frac{d}{dx} + D^\partial\right) \]
where $D^\partial$ is the Dirac operator over $\partial M$ and $c(d/dx)$ is the Clifford multiplication by the normal vector $d/dx$. Then $D\otimes I_n$ is the Dirac operator acting on $\mathcal S_M\otimes \mathbb C^n$, when we use the trivial connection on the bundle $M\times \mathbb C^n$ over $M$.  If no confusion is likely to arise, we shall write $D$ instead of $D\otimes I_n$.

Now a subspace $L$ of $\ker D^\partial$ is Lagrangian if $\cl(d/d x)L = L^{\perp} \cap \ker D^\partial$. In our case, since 	$\partial M$ bounds $M$, the existence of such a Lagrangian subspace follows from the cobordism invariance of the index of Dirac operators. Let $L_{>0}^2(\mathcal S_M\otimes \mathbb C^n|_{\partial M})$ be the positive eigenspace of $D^\partial$, i.e. the $L^2$-closure of the direct sum of eigenspaces with positive eigenvalues of $D^\partial$. Then the projection 
\[ P^\partial := P_{\partial M}(L) = P_{\partial M} + P_L \] 
imposes an APS type boundary condition for $D$, where $P_{\partial M}$, resp. $P_L$, is the orthogonal projection $L^2(\mathcal S_M\otimes \mathbb C^n|_{\partial M}) \to L_{>0}^2\mathcal (\mathcal S_M\otimes \mathbb C^n|_{\partial M})$, resp. $L^2(\mathcal S_M\otimes \mathbb C^n|_{\partial M})\to L$. Let us denote the corresponding self-adjoint elliptic operator by $D_{P^\partial} $. 

Let $L^2_{\geq 0}(\mathcal S_M\otimes \mathbb C^n; P^\partial)$ be the nonnegative eigenspace of $D_{P^\partial}$ and  $P_{P^\partial}$ the orthogonal projection 
\[P_{P^\partial} : L^2(\mathcal S_M\otimes \mathbb C^n) \to  L^2_{\geq 0}(\mathcal S_M\otimes C^n; P^\partial).\] 
More generally, for each unitary $U \in U_n( C^\infty(M))$ over $M$, the projection $UP^\partial U^{-1}$ imposes an APS type boundary condition for $D$ and we shall denote the corresponding elliptic self-adjoint operator by $D_{UP^\partial U^{-1}}$. Similarly, let $P_{UP^\partial U^{-1}}$ be the orthogonal projection 
\[P_{UP^\partial U^{-1}} : L^2(\mathcal S_M\otimes \mathbb C^n)\to L^2_{\geq 0}(\mathcal S_M\otimes \mathbb C^n; UP^\partial U^{-1})\] 
where $L^2_{\geq 0}(\mathcal S_M\otimes \mathbb C^n; UP^\partial U^{-1})$  is the nonnegative eigenspace of $D_{UP^\partial U^{-1}}$.

With the above notation, we define the Toeplitz operator on $M$ with respect to $U$ as follows, cf.\cite[Definition 2.1]{XD-WZ06}.
\begin{definition} 
	\[ T_U := P_{U P^\partial U^{-1}}\circ U \circ P_{P^\partial} .\]
\end{definition}

Dai and Zhang's index theorem for Toeplitz operators on odd dimensional manifolds with boundary \cite[Theorem 2.3]{XD-WZ06} states that
	\begin{equation}\label{Eq:toep}
	 \ind(T_U) = -\int_M \ahat (M) \wedge \Ch_\bullet(U) - \reta(\partial M, U) + \tau_{\mu}(UP^\partial U^{-1}, P^\partial, \mathcal P_M)  
	\end{equation}
	where $\mathcal P_M$ is the Calder\'on projection associated to the Dirac operator $D$ on $M$ (cf. \cite{BB-KW93}) and $\tau_{\mu}(UP^\partial U^{-1}, P^\partial, \mathcal P_M)$ is the Maslov triple index \cite[Definition 6.8]{PK-ML04}.
 The reduced $\eta$-invariant $\reta(\partial M, U)$ will be defined after the remarks.
\begin{remark}
	Notice that the integral in $(\ref{Eq:toep})$ differs from Dai and Zhang's by a constant coefficient $(2\pi i)^{-(\dim M+1)/2}$. This is due to the fact that our definition of characteristic classes follows topologists' convention, i.e. factors such as $(\frac{1}{2\pi i})^{k/2}$ are already included. 
\end{remark}

\begin{remark}
	The Maslov triple index $\tau_{\mu}(UP^\partial U^{-1}, P^\partial, \mathcal P_M)$ is an integer. For unitaries $U, V\in U_n(C^\infty(M))$, if there a path of unitaries $u_s\in  U_n(C^\infty(\partial M))$ with $s\in [0,1]$ such that  $u_0 = U|_{\partial M}$ and $u_1 = V|_{\partial M}$, one has
	\[ \tau_{\mu}(UP^\partial U^{-1}, P^\partial, \mathcal P_M) = \tau_{\mu}(VP^\partial V^{-1}, P^\partial, \mathcal P_M), \]
cf.\cite[Lemma 6.10]{PK-ML04}.
\end{remark}

To define $\reta(\partial M , U)$, let us first consider $D_{[0,1]} $ the Dirac operator over $[0,1]\times \partial M$.  If no confusion is likely to arise, we shall write $U$ for both $U|_{\partial M}$ and the trivial lift of $U|_{\partial M}$ from $\partial M$ to $[0, 1]\times \partial M$. Let
\begin{equation}  
	D^{\psi, U}_{[0,1]} :=  D_{[0,1]} + (1-\psi) U^{-1}[D_{[0,1]}, U ]
\end{equation}
over $[0,1]\times \partial M$, where $\psi$ is a cut-off function on $[0, 1]$ with $\psi \equiv 1$ near $\{ 0\} $ and $\psi \equiv 0$ near $\{ 1\}$. With APS type boundary conditions determined by $P^\partial $ on $\{0\}\times \partial M$ and $\Id - U^{-1}P^\partial U $ on $\{1\}\times \partial M$, $D^{\psi, U}_{[0,1]}$ becomes a self-adjoint elliptic operator, denoted 
$\left(D^{\psi, U}_{[0,1]}; P_0^U\right)$. See proposition $\ref{prop:path}$ for an explanation of the choice of notation.


Similarly, 
\begin{equation}\label{eq:vdirac}
	D^{\psi, U}_{[0,1]}(t) := D_{[0,1]} + (1-t\psi) U^{-1}\left[D_{[0,1]}, U\right].
\end{equation}
Denote by $\left(D^{\psi, U}_{[0,1]}(t); P_0^U\right)$ the elliptic operator $D^{\psi, U}_{[0,1]}(t)$ with boundary condition $P_0^U$. Note that $D^{\psi, U}_{[0,1]}(1) = D^{\psi, U}_{[0,1]}$. 
\begin{definition}(\cite[Definition 2.2]{XD-WZ06})
	\[ \overline \eta(\partial M, U) := \reta(D^{\psi, U}_{[0,1]}; P_0^U) - \sflow \left(D^{\psi, U}_{[0,1]}(t); P_0^U\right)_{0 \leq t\leq 1} \]
	where 
	\[  \reta(D^{\psi, U}_{[0,1]}) = \frac{\dim \ker (D^{\psi, U}_{[0,1]}; P_0^U) + \eta(D^{\psi, U}_{[0,1]}; P_0^U)}{2}  . \]
\end{definition}
\begin{remark}
$\overline \eta(\partial M, U)$ is independent of the cut-off function $\psi$ \cite[Proposition 5.1]{ XD-WZ06}.
\end{remark}


In order to show the equality $\reta(\partial M, U) = \reta(e_U D_{\mathbb S^1\times\partial M} e_U)\mod \mathbb Z$, we need to relate the operator $e_U D_{\mathbb S^1\times\partial M} e_U$ to $D^{\psi, U}_{[0,1]}$,  where $D_{\mathbb S^1\times\partial M} = \cl(d/{d\theta})\left(\frac{d}{d\theta} + D^\partial\right)$ is the Dirac operator over $\mathbb S^1\times \partial M$ and $e_U$ is the cup product of $U$ with $e^{2 \pi i\theta}\in K^1(\mathbb S^1)$. Recall that
\begin{align*}
	e_U D_{\mathbb S^1\times\partial M} e_U &= \begin{pmatrix} f & g + h U \\ h U^\ast +g & 1-f \end{pmatrix}\begin{pmatrix} D_{\mathbb S^1\times\partial M} & 0 \\ 0 & D_{\mathbb S^1\times\partial M} \end{pmatrix} \begin{pmatrix} f & g + h U \\ h U^\ast +g & 1-f \end{pmatrix} \\
&= \mathcal U \begin{pmatrix} 1 & 0\\ 0 & 0 \end{pmatrix} \mathcal U^\ast \begin{pmatrix} D_{\mathbb S^1\times\partial M} & 0 \\ 0 & D_{\mathbb S^1\times\partial M} \end{pmatrix} \mathcal U \begin{pmatrix} 1 & 0\\ 0 & 0 \end{pmatrix} \mathcal U^\ast 
\end{align*}
where 
\[ \mathcal U = \begin{pmatrix}  f_1^{1/2} + f_2^{1/2} U & (1-f)^{1/2} \\ (1-f)^{1/2}  &  - f_1^{1/2} - f_2^{1/2} U^\ast \end{pmatrix}\] 
with $f_1 = \chi_{[0, 1/2]} f$ and $f_2 = \chi_{[1/2, 1]} f$. Then viewed as an operator over $[0,1 ]\times \partial M$, 
\[ \mathcal U^\ast (e_U D_{\mathbb S^1\times\partial M} e_U) \mathcal U =  D_{[0,1]} + f_2 U^{-1}\left[ D_{[0,1]}, U\right] \]
with the boundary condition 
\[ \beta(0,x) = U \beta(1,x) , \quad  \textup{for} \ \forall  x\in \partial M \ \textup{ and } \ \beta\in \Gamma([0,1]\times \partial M ; S\otimes \mathbb C^n ).\]
Let $H^\partial :=  L^2(\{ 0\}\times \partial M ; S\otimes \mathbb C^n) \oplus L^2(\{1\}\times \partial M; S\otimes \mathbb C^n)$, then the above boundary condition can be written as 
\[ \frac{1}{2}\begin{pmatrix} 1 & -U \\ -U^{-1} & 1 \end{pmatrix} \beta = 0, \quad \textup{for}\ \forall \ \beta\in H^\partial.\]
From now on, let us assume $\psi = 1 - f_2$. In particular, one has
\[\mathcal U^\ast (e_U D_{\mathbb S^1\times\partial M} e_U) \mathcal U = D^{\psi, U}_{[0,1]}.\] 
Now consider 
\begin{equation}\label{eq:path}
 P_t^U = \begin{pmatrix} \cos^2t P^\partial + \sin^2t(I-P^\partial) & -\cos t\sin tU \\ -\cos t\sin tU^{-1} & \cos^2t (\Id - U^{-1}P^\partial U) + \sin^2tU^{-1}P^\partial U \end{pmatrix} 
\end{equation}
for $ 0\leq t \leq \pi/4$(cf.\cite[Equation 5.13]{PK-ML04}, \cite[Section 3]{JB-ML99}). This is a path of projections in $B(H^\partial)$ such that 
\[  P_0^U = \begin{pmatrix} P^\partial & 0 \\ 0 & \Id-U^{-1}P^\partial U \end{pmatrix} \]
and 
\[  P_{\pi/4}^U = \frac{1}{2}\begin{pmatrix} 1 & -U \\ -U^{-1} & 1 \end{pmatrix}. \]
For each $t\in [0, \pi/4]$, the Dirac operator $D_{[0,1]}^{\psi, U}$, with the boundary condition $P_t^U$, is a self-adjoint elliptic operator, denoted by $(D_{[0,1]}^{\psi, U}; P_t^U)$. 

With the above notation, we have the following slight generalization of a theorem by Br\"{u}ning  and Lesch \cite[Theorem 3.9]{JB-ML99} .
\begin{proposition}\label{prop:path}
\[ \frac{d}{dt}\eta(D_{[0,1]}^{\psi, U}; P_t^U) = 0.\] 
\end{proposition}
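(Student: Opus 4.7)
The underlying operator $D^{\psi,U}_{[0,1]}$ is independent of $t$; only the boundary projection $P^U_t$ varies. Consequently the plan is to reduce the computation of $\tfrac{d}{dt}\eta(D^{\psi,U}_{[0,1]};P^U_t)$ to the variational formula for the eta invariant under a smooth path of self--adjoint boundary conditions, in the style of Brüning--Lesch \cite[Theorem 3.9]{JB-ML99}. The whole point is that the path $t\mapsto P^U_t$ is not arbitrary: it is the one--parameter orbit of $P^U_0$ under rotation in the symplectic plane spanned by $P^\partial$ and its Cayley--image $U^{-1}P^\partial U$, so one expects the boundary term that governs $\tfrac{d}{dt}\eta$ to vanish identically.

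The first step is to rewrite the path in a convenient form. Setting
\[
W_t=\begin{pmatrix}\cos t & -\sin t\,U\\ \sin t\, U^{-1} & \cos t\end{pmatrix},
\]
a direct computation shows that $W_t$ is a smooth family of unitaries on $H^\partial$ with $P^U_t=W_t P^U_0 W_t^{-1}$ and $W_0=\mathrm{Id}$. Since $W_t$ acts only on the boundary data, the problem is now cast in the framework where one may apply the Brüning--Lesch variational principle: the family $(D^{\psi,U}_{[0,1]};P^U_t)$ is isospectral away from $0$ to the family obtained by conjugating the domain, and the derivative of $\eta$ equals a boundary heat--kernel trace of the form
\[
\frac{d}{dt}\eta(D^{\psi,U}_{[0,1]};P^U_t)
=-\frac{2}{\sqrt{\pi}}\lim_{s\to 0^+}\Gamma\bigl(s+\tfrac12\bigr)\,\mathrm{Tr}\!\left(\dot P^U_t\,\mathrm{e}^{-s(D^{\psi,U}_{[0,1]})^2}\Big|_{\partial}\right),
\]
the boundary trace being taken over the two components of $\partial([0,1]\times\partial M)$.

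The second step is to show that this boundary trace vanishes identically in $t$. Using $\dot P^U_t=[\dot W_t W_t^{-1},P^U_t]$ and the fact that $c(d/dx)$ anticommutes with the tangential Dirac operator $D^\partial$ while $[c(d/dx),U]=0$, one finds that $\dot P^U_t$ is, up to a unitary conjugation, off--diagonal with respect to the $\pm1$--eigenspaces of an involution that commutes with the relevant heat operator. This is precisely the symmetry exploited in \cite[Theorem 3.9]{JB-ML99}, and it forces the pointwise trace on the boundary to vanish. Writing this out carefully for the present situation--with boundary $\partial M$ rather than a point and with the twist by the unitary $U$--is the slight generalization referred to in the statement.

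The main obstacle will be the second step: one must verify that the algebraic identity expressing $\dot P^U_t$ as a supercommutator with respect to an involution preserved by the heat kernel still holds after introducing the twisting term $(1-\psi)U^{-1}[D_{[0,1]},U]$ in $D^{\psi,U}_{[0,1]}$. Because this extra term is supported away from $\{0\}\times\partial M$ and vanishes together with $\psi$ near $\{1\}\times\partial M$ (after possibly renormalising $\psi$), it does not contribute to the boundary trace; the argument then reduces verbatim to the one given in \cite{JB-ML99}, yielding $\tfrac{d}{dt}\eta(D^{\psi,U}_{[0,1]};P^U_t)=0$.
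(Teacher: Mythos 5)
Your proposal follows the same general strategy as the paper (reduce to Br\"uning--Lesch \cite[Theorem 3.9]{JB-ML99}), but two of its concrete claims are false and the actual hypotheses of that theorem are never verified. First, the conjugation identity $P^U_t=W_tP^U_0W_t^{-1}$ does not hold: the $(1,2)$ entry of $W_tP^U_0W_t^{-1}$ is $\cos t\sin t\,(2P^\partial U-U)$ (and $\cos t\sin t\,(U-2P^\partial U)$ for the opposite order of conjugation), whereas the $(1,2)$ entry of $P^U_t$ in $\eqref{eq:path}$ is $-\cos t\sin t\,U$; these agree only if $P^\partial U=0$. Equivalently, at $t=0$ one has $\dot P^U_0=\bigl(\begin{smallmatrix}0&-U\\-U^{-1}&0\end{smallmatrix}\bigr)$ while $[\dot W_0,P^U_0]=\bigl(\begin{smallmatrix}0&2P^\partial U-U\\2U^{-1}P^\partial-U^{-1}&0\end{smallmatrix}\bigr)$. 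So the path $P^U_t$ is not the orbit of $P^U_0$ under your rotation family, and everything downstream that uses $\dot P^U_t=[\dot W_tW_t^{-1},P^U_t]$ or the claimed isospectrality after conjugating the domain collapses. Note also that the family $\bigl(D^{\psi,U}_{[0,1]};P^U_t\bigr)$ is genuinely not isospectral: its spectral flow $\sflow(D^{\psi,U}_{[0,1]};P^U_t)$ appears in Theorem $\ref{thm:eta}$ and need not vanish; what is being proved is vanishing of the $t$-derivative of $\eta$, not constancy of the spectrum.

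Second, the assertion that the twisting term $(1-\psi)U^{-1}[D_{[0,1]},U]$ does not contribute near the boundary is wrong at the face $\{1\}\times\partial M$: there $\psi\equiv 0$, so $1-\psi\equiv 1$ and $D^{\psi,U}_{[0,1]}=\cl(d/d\theta)\bigl(\frac{d}{d\theta}+U^{-1}D^\partial U\bigr)$. The fact that the tangential operators at the two faces are $D^\partial$ and $U^{-1}D^\partial U$ is precisely what must be handled, and it is exactly the point of the ``slight generalization'' of \cite[Theorem 3.9]{JB-ML99}; the argument does not reduce verbatim to the untwisted case. Finally, the symmetry that actually forces the eta-derivative to vanish is asserted but never exhibited. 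What is needed (and what the paper's proof supplies) is the tangential data $\tau=\bigl(\begin{smallmatrix}0&U\\U^{-1}&0\end{smallmatrix}\bigr)$, $\widetilde\gamma$, $\widetilde A=\bigl(\begin{smallmatrix}D^\partial&0\\0&-U^{-1}D^\partial U\end{smallmatrix}\bigr)$, the identities $\widetilde\gamma P^U_t=(I-P^U_t)\widetilde\gamma$, $[P^U_t,\widetilde A^2]=0$, $P^U_t\widetilde A P^U_t=\cos(2t)\,|\widetilde A|\,P^U_t$, and crucially an explicit unitary $\mu$ with $\mu^2=-I$ anticommuting with $\tau$, $\widetilde\gamma$ and $\widetilde A$; here $\mu=\bigl(\begin{smallmatrix}0&U\\-U^{-1}&0\end{smallmatrix}\bigr)$ works because $\cl(d/d\theta)$ commutes with $U$. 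Without identifying $\mu$ and checking these relations, the vanishing of your boundary heat-trace is an assertion, not a proof.
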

\begin{proof}
Following \cite[Section 3]{JB-ML99}, we define
\[\tau := \begin{pmatrix} 0 & U \\ U^{-1} & 0\end{pmatrix} = \begin{pmatrix} 0 & U \\ U^\ast & 0\end{pmatrix},  \]
\[ \widetilde\gamma := \begin{pmatrix} \cl(d/d\theta) & 0 \\ 0 & -\cl(d/d\theta)\end{pmatrix}, \]
\[\widetilde A  := \begin{pmatrix} D^\partial  & 0 \\ 0 & -U^{-1}D^\partial U \end{pmatrix}. \]
where $\widetilde A$ is determined by $D_{[0,1]}^{\psi, U}$ near the boundary, by noticing that  \[D_{[0,1]}^{\psi, U} =  \cl(d/{d\theta})\left(\frac{d}{d\theta} + D^\partial\right) \]
 near $\{0\} \times \partial M$ and 	
\[ D_{[0,1]}^{\psi, U} =  \cl(d/{d\theta})\left(\frac{d}{d\theta} + U^{-1}D^\partial U\right) \] 
near $\{1\} \times \partial M$.
Since $\cl(d/d\theta ) \ U = U \ \cl(d/d\theta)\in \End(\mathcal S\otimes \mathbb C^n)$, it follows that
\[\tau \widetilde A + \widetilde A \tau = 0 =\tau \widetilde\gamma + \widetilde \gamma\tau, \quad \tau^2 = 1, \quad \tau = \tau^\ast.\]   
Moreover, one verifies by calculation (cf. \cite[Eqs. (3.11) to (3.13)]{JB-ML99})
\begin{align*}
	& \widetilde \gamma P_t^U = (I-P_t^U)\widetilde\gamma;\\
	& [P_t^U, {\widetilde A}^2] = 0;\\
	& P_t^U \widetilde A P_t^U = \cos(2t) |\widetilde A|P_t^U.
\end{align*}
Then by \cite[Theorem 3.9]{JB-ML99}, it suffices to find a unitary $\mu: H^\partial \to H^\partial$ such that 
\[\mu^2 = -I, \quad \mu\tau + \tau \mu = \mu\widetilde \gamma + \widetilde\gamma \mu = \mu \widetilde A + \widetilde A\mu = 0 .\]
Let 
\[ \mu := \begin{pmatrix} 0 & U \\ -U^{-1} & 0\end{pmatrix}. \] This finishes the proof.
\end{proof}

Now the equality $\reta(\partial M, U) = \reta(e_U D_{\mathbb S^1\times \partial M} e_U) \mod \mathbb Z$ follows as a corollary. To be slightly more precise, we have the following result.

\begin{theorem}\label{thm:eta}
\[\reta(\partial M, U) = \reta(e_U D_{\mathbb S^1\times \partial M}  e_U) - \sflow(D_{[0,1]}^{\psi, U}; P_t^U) - \sflow (D_{[0,1]}^{\psi, U}(t); P_0^U)_{0\leq t \leq 1} . \]
In particular, 
\[ \reta(\partial M, U) = \reta(e_U D_{\mathbb S^1\times \partial M}  e_U) \mod \mathbb Z. \]
\end{theorem}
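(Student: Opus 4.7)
The plan is to identify the operator $e_U D_{\mathbb S^1\times \partial M} e_U$, via the unitary $\mathcal U$ constructed just before Proposition~\ref{prop:path}, with the boundary-value problem $(D^{\psi,U}_{[0,1]}; P^U_{\pi/4})$, and then deform the boundary condition back to $P_0^U$ using the path $\{P^U_t\}_{0\leq t\leq \pi/4}$. From there, the definition of $\overline\eta(\partial M, U)$ via $(D^{\psi,U}_{[0,1]}; P_0^U)$ and the straight-line homotopy $D^{\psi,U}_{[0,1]}(t)$ closes the loop.

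Concretely, the first step is to record the identity
\[
\mathcal U^\ast\bigl(e_U D_{\mathbb S^1\times\partial M} e_U\bigr)\mathcal U = \bigl(D^{\psi,U}_{[0,1]};\, P^U_{\pi/4}\bigr),
\]
already established in the discussion preceding Proposition~\ref{prop:path} with the choice $\psi = 1-f_2$. Unitary conjugation preserves the spectrum, hence preserves both $\eta$ and the dimension of the kernel; therefore
\[
\reta\bigl(e_U D_{\mathbb S^1\times\partial M} e_U\bigr) = \reta\bigl(D^{\psi,U}_{[0,1]}; P^U_{\pi/4}\bigr).
\]

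The second step is to apply Proposition~\ref{prop:path} to the family $\{(D^{\psi,U}_{[0,1]}; P^U_t)\}_{0\leq t\leq \pi/4}$. Since $\frac{d}{dt}\eta(D^{\psi,U}_{[0,1]}; P^U_t) = 0$ wherever the family is smooth, the only variation of the reduced eta invariant along the path comes from integer jumps at eigenvalue crossings through zero, which by definition are recorded by the spectral flow. The standard variation formula relating $\reta$ and $\sflow$ (cf.\ \cite[Lemma 3.4]{PK-ML04}) then gives
\[
\reta\bigl(D^{\psi,U}_{[0,1]}; P^U_{\pi/4}\bigr) - \reta\bigl(D^{\psi,U}_{[0,1]}; P^U_0\bigr) = \sflow\bigl(D^{\psi,U}_{[0,1]}; P^U_t\bigr)_{0\leq t\leq \pi/4}.
\]

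The third step is to invoke the definition of $\overline\eta(\partial M, U)$,
\[
\overline\eta(\partial M, U) = \reta\bigl(D^{\psi,U}_{[0,1]}; P^U_0\bigr) - \sflow\bigl(D^{\psi,U}_{[0,1]}(t); P^U_0\bigr)_{0\leq t\leq 1},
\]
and substitute. Combining the three displayed equations yields the main identity of Theorem~\ref{thm:eta}, and the ``mod~$\mathbb Z$'' statement is immediate since both spectral flows are integers.

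The subtle point, and what I anticipate as the main obstacle, is justifying the second step: Proposition~\ref{prop:path} only states that the derivative of $\eta$ vanishes, but this is not quite the statement that $\reta$ is constant along $t\in [0,\pi/4]$, because eigenvalues may cross zero and the kernel dimension can jump. One must check that the family $(D^{\psi,U}_{[0,1]}; P^U_t)$ is a genuine continuous family of self-adjoint Fredholm operators with compact resolvent, so that the spectral flow is well defined, and that the general variation formula (jumps of $\reta$ across zero-crossings being accounted for exactly by $\sflow$) applies. Once this is in place, the argument is purely bookkeeping among the three identities above.
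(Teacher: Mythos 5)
Your proposal is correct and follows essentially the same route as the paper: identify $e_U D_{\mathbb S^1\times\partial M} e_U$ with $(D^{\psi,U}_{[0,1]}; P^U_{\pi/4})$ via conjugation by $\mathcal U$, apply the variation formula of \cite[Lemma 3.4]{PK-ML04} along the path $P^U_t$, use Proposition~\ref{prop:path} to kill the integral term, and conclude with the definition of $\overline\eta(\partial M, U)$. The caveat you raise about eigenvalue crossings is precisely what Lemma 3.4 of Kirk--Lesch handles, which is the same lemma the paper invokes.
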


\begin{proof}
By \cite[Lemma 3.4]{PK-ML04},
\begin{align*} & \reta(e_U D_{\mathbb S^1\times \partial M}  e_U) -	\reta(D_{[0,1]}^{\psi, U}; P_0^U) \\
& = \sflow(D_{[0,1]}^{\psi, U}; P_t^U)_{0\leq t \leq \pi/4} + \int_0^{\pi/4} \frac{d}{dt}\frac{1}{2}\eta(D_{[0,1]}^{\psi, U}; P_t^U) dt.
\end{align*}
The formula now follows from the definition of $\overline \eta(\partial M, U)$ and the proposition above.

\end{proof}

\section{Relative Index Pairing for Odd Dimensional Manifolds with Boundary}\label{sec:dz}

In this section, we shall use the Toeplitz index theorem for odd dimensional manifolds with boundary by Dai and Zhang to prove our analogue of the index pairing formula by Lesch, Moscovici and Pflaum \cite[Theorem $7.6$]{L-M-P09}.

First let us recall the even case. Let $X$ be an even dimensional spin manifold with boundary $\partial X$. We assume its Riemannian metric has product structure near the boundary. The associated Dirac operator takes of the following form \[  D_X = \begin{pmatrix} & D^- \\ D^+ \end{pmatrix} = \begin{pmatrix} & -\frac{d}{dx} + D_{\partial X} \\ \frac{d}{dx} + D_{\partial X} \end{pmatrix} \]
near the boundary, where $D_{\partial X}$ is the Dirac operator over $\partial X$, cf. Appendix $\ref{app:spinor}$ .  
\begin{definition}
Let $P_{\geq 0} = \chi_{[0,\infty)}(D_{\partial X}) $ and $D^+_{P_{\geq 0}}$ be the elliptic operator $D^+$ with the APS boundary condition $P_{\geq 0}$, cf.\cite{A-P-S75a}. Then $ \ind_{APS}(D^+) := \ind(D^+_{P_{\geq 0}})$.
\end{definition}
Recall that a relative $K$-cycle in $K^0(X, \partial X)$ is a triple $ [p, q, h_s]$ such that $p, q \in M_n(C^\infty(X))$ are two projections over $X$ and $h_s \in M_n(C^\infty(\partial X))$, $s\in [0,1]$, is a path of projections over $\partial X$ such that $h_0 = p|_{\partial X}$ and $h_1 = q|_{\partial X}$. If $p$ and $q$ are constant along the normal direction near the boundary, then the relative index pairing by Lesch, Moscovici and Pflaum \cite[Theorem $7.6$]{L-M-P09} states that
\begin{align*}
	&\ind_{[D_X]}( [p, q, h_s]) = \ind_{APS} (qD^+ q) - \ind_{APS} (pD^+ p) +\sflow (h_s D_{\partial X} h_s)_{0\leq s \leq 1}.
\end{align*}



Now let $M$ be an odd dimensional spin manifold with boundary $\partial M$. We assume its Riemannian metric has product structure near the boundary. The Dirac operator $D$ over $M$ naturally induces an element in $KK(C_0(M\setminus \partial M), \cl_1) \cong K_1(M,\partial M )$ cf. \cite[Section 2]{B-D-T89}, from which one has the relative index pairing map
\begin{equation}\label{eq:relative}  
\ind_{[D]} : K^1(M, \partial M) \to \mathbb Z. 
\end{equation} As an intermediate step, let us first show a pairing formula by using the lifted data on $\mathbb S^1\times M$. The method of proof is similar to the one used in proving Theorem $\ref{Thm:main}$. Denote the Dirac operator over $\mathbb S^1\times M$ by $\ldirac$ and its restriction to the half-spinor bundles by $\ldirac^+$. We shall explain in detail the structure of $\ldirac $ near the boundary in appendix $\ref{app:spinor}$.

\begin{lemma}\label{lemma:oddindex}
	For a relative $K$-cycle $[U, V, u_s]\in K^1(M, \partial M)$, that is, $U, V\in U_n(C^\infty(M))$ are two unitaries over $M$ with $u_s \in U_n(C^\infty(\partial M))$, $s\in [0, 1]$, a smooth path of unitaries over $\partial M$ such that $u_0 = U|_{\partial M} $ and $u_1 = V|_{\partial M}$. If $U$ and $V$ are constant along the normal direction near the boundary, then 
	\begin{align*}  
	& \ind_{[D]} ([U,V, u_s]) \\
	&= \ind_{APS} (e_V\ldirac^+ e_V) - \ind_{APS} (e_U\ldirac^+ e_U) +\sflow (e_{u_s}D_{\mathbb S^1\times \partial M}  e_{u_s})_{0\leq s \leq 1}
	\end{align*}
\end{lemma}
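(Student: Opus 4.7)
The plan is to lift the relative $K$-cycle $[U, V, u_s] \in K^1(M, \partial M)$ to an even relative $K$-cycle $[e_U, e_V, e_{u_s}] \in K^0(\mathbb S^1 \times M, \mathbb S^1 \times \partial M)$ via the cup product with the generator $[e^{2\pi i \theta}]$ of $K^1(\mathbb S^1)$, and then apply the Lesch--Moscovici--Pflaum relative index pairing formula \cite[Theorem 7.6]{L-M-P09} on the even-dimensional manifold $\mathbb S^1 \times M$. This mirrors the strategy of the proof of Theorem \ref{Thm:main}.

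First, using Lemma \ref{Lemma:ep}, I would verify that $[e_U, e_V, e_{u_s}]$ is indeed a relative $K$-cycle: $e_{u_s}$ is a smooth path of projections on $\mathbb S^1 \times \partial M$ with $e_{u_0} = e_U|_{\mathbb S^1 \times \partial M}$ and $e_{u_1} = e_V|_{\mathbb S^1 \times \partial M}$, and since the entries of $e_U$ depend on $U$ only through pointwise algebraic expressions in $U, U^{-1}$ weighted by scalar functions on $\mathbb S^1$, the hypothesis that $U$ and $V$ are constant along the normal direction near $\partial M$ passes to $e_U$ and $e_V$. The product Riemannian structure on $\mathbb S^1 \times M$ near $\mathbb S^1 \times \partial M$ is automatic. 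Applied to $[e_U, e_V, e_{u_s}]$, \cite[Theorem 7.6]{L-M-P09} then yields
\[
\ind_{[\ldirac]}([e_U, e_V, e_{u_s}]) = \ind_{APS}(e_V \ldirac^+ e_V) - \ind_{APS}(e_U \ldirac^+ e_U) + \sflow(e_{u_s} D_{\mathbb S^1 \times \partial M} e_{u_s})_{0 \leq s \leq 1}.
\]

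The heart of the argument, and its main obstacle, is the naturality identity
\[
\ind_{[D]}([U, V, u_s]) = \ind_{[\ldirac]}([e_U, e_V, e_{u_s}]).
\]
To establish it I would use the cohomological formulas recalled in Section \ref{sec:pre}. Since $\omega_{\ldirac} = \ahat(\mathbb S^1 \times M) = \pi_M^\ast \ahat(M)$ (because $\ahat(\mathbb S^1) = 1$), the projection formula for integration along the fiber $\pi_\ast$ reduces the even integral on $\mathbb S^1 \times M$ to an integral on $M$ involving $\pi_\ast \Ch_\bullet(e_U)$, $\pi_\ast \Ch_\bullet(e_V)$, and $\pi_\ast \tch_\bullet(e_{u_s})$. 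By Proposition \ref{prop:product} these equal $-\Ch_\bullet(U)$, $-\Ch_\bullet(V)$, and $\tch_\bullet(u_s)$ respectively, which with the sign conventions of Section \ref{sec:pre} reproduces the odd integral computing $\ind_{[D]}([U, V, u_s])$. Equivalently, the identity is an instance of the multiplicativity of the Kasparov product together with $\langle [D_{\mathbb S^1}], [e^{2\pi i \theta}] \rangle = 1$, but the direct Chern--Weil computation has the advantage of making the sign conventions transparent. The only delicate point is bookkeeping of signs and of the boundary-correction terms in the relative pairing formula under the lift.
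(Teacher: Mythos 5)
Your overall strategy---lifting $[U,V,u_s]$ to $[e_U,e_V,e_{u_s}]$, applying \cite[Theorem 7.6]{L-M-P09} on $\mathbb S^1\times M$, and matching the two pairings by integration along the $\mathbb S^1$-fiber---is the same as the paper's. The gap is precisely in the step you yourself call the heart of the argument, the identity $\ind_{[D]}([U,V,u_s])=\ind_{[\ldirac]}([e_U,e_V,e_{u_s}])$. The cohomological formulas recalled in Section \ref{sec:pre} compute the index pairing only for representatives whose data agree near the boundary (unitaries coinciding on a collar, homotopy constant); they contain no boundary transgression term and therefore do not apply to a general relative cycle with nontrivial $u_s$, nor to its lift with nontrivial $e_{u_s}$. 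Asserting that the fiber-integrated even expression, involving $\pi_\ast\tch_\bullet(e_{u_s})=\tch_\bullet(u_s)$, ``reproduces the odd integral computing $\ind_{[D]}([U,V,u_s])$'' presupposes a boundary-corrected cohomological formula for the odd relative pairing on a manifold with boundary. No such formula is available at this point of the paper---producing one is essentially what Lemma \ref{lemma:oddindex} and Theorem \ref{Thm:oddpairing} are for---so as written the step is circular; and on the even side the transgression-corrected formula you implicitly invoke is likewise not the one of Section \ref{sec:pre}, but would itself have to be derived from \cite[Theorem 7.6]{L-M-P09} together with the APS index theorem and the variational formula $\eqref{Eq:gaps}$.

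The missing ingredient is a reduction to special representatives. Since the cup product is defined on $K$-theory classes and the right-hand side of the lemma equals $\ind_{[\ldirac]}([e_U,e_V,e_{u_s}])$ by \cite[Theorem 7.6]{L-M-P09}, both sides depend only on the class in $K^1(M,\partial M)\cong K^1(M\setminus\partial M)$; one may therefore assume $U|_{[0,\epsilon)\times\partial M}=V|_{[0,\epsilon)\times\partial M}$ and $u_s\equiv U|_{\partial M}$ for all $s$. For such a representative the spectral-flow term vanishes, the APS index theorem gives $\ind_{APS}(e_U\ldirac^+e_U)=\int_{\mathbb S^1\times M}\ahat(\mathbb S^1\times M)\wedge\Ch_\bullet(e_U)-\reta(e_UD_{\mathbb S^1\times\partial M}e_U)$ with the identical $\reta$-term for $V$ (since $e_U=e_V$ over $\mathbb S^1\times\partial M$), so the eta terms cancel in the difference; Proposition \ref{prop:product} converts the bulk integrals to $-\int_M\ahat(M)\wedge\Ch_\bullet(U)$, resp. $-\int_M\ahat(M)\wedge\Ch_\bullet(V)$, and the formula of Section \ref{sec:pre}, which does apply to such representatives, identifies the result with $\ind_{[D]}([U,V,u_s])$. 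This is exactly the paper's proof, and it is the cleanest way to close your gap. Your alternative remark about multiplicativity of the Kasparov product and $\langle [D_{\mathbb S^1}],[e^{2\pi i\theta}]\rangle=1$ is a legitimate idea in principle, but it is only asserted: for manifolds with boundary one would still have to check that the Baum--Douglas--Taylor class of $\ldirac$ is the external product of $[D_{\mathbb S^1}]$ with $[D]$ and that this product is compatible with the relative cup product, which your proposal does not do.
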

\begin{proof}
A relative $K$-cycle $[U, V, u_s]\in K^1(M, \partial M)$ naturally induces a relative $K$-cycle $[e_U, e_V, e_{u_s}]\in K^0(M, \partial M) $. By \cite[Theorem $7.6$]{L-M-P09},
\begin{equation}
	\xymatrix{
	 [U,V, u_s] \ar@{|->}[d] \\
	\ind_{APS} (e_V\ldirac^+ e_V) - \ind_{APS} (e_U\ldirac^+ e_U) +\sflow (e_{u_s}D_{\mathbb S^1\times \partial M} e_{u_s})_{0\leq s \leq 1}
	}\label{Eq:oddpair}
\end{equation}
is a well-defined map from $K^1(M, \partial M) $ to $\mathbb Z$. We need to show that it does agree with the relative index pairing induced by that of $K^1(M\backslash \partial M)$. As before (cf. Section $ \ref{sec:pre}$ above), we can assume $U|_{[0, \epsilon)\times \partial M}=V|_{[0, \epsilon)\times \partial M }$ and $u_s = U|_{\partial M} = V|_{\partial M}$, for all $s\in [0,1]$. It suffices to prove the lemma for representatives of relative $K$-cycles of this special type. Notice that such a representative also defines an element in $K^1(M\backslash \partial M)$ by its restriction to $M\backslash \partial M$ and recall from Section $ \ref{sec:pre} $ that the index map $\eqref{eq:relative}$ has the following explicit formula:
\[ \ind_{[D]} ([V, U, u_s]) = -\int_{M} \ahat(M) \wedge \left[\Ch_\bullet(V) -\Ch_\bullet(U)\right]. \]
Now by the APS index theorem for manifolds with boundary, 
\begin{align} 
	\ind_{APS}(e_U\ldirac^+ e_U) &= \int_{\mathbb S^1 \times M} \ahat(\mathbb S^1\times M) \wedge \Ch_\bullet(e_U) - \reta(e_U D_{\mathbb S^1\times \partial M} e_U) \\
	& =  - \int_{M} \ahat(M) \wedge \Ch_\bullet(U) - \reta(e_U D_{\mathbb S^1\times \partial M} e_U). 
\end{align} 
where the second equality follows from Proposition $\ref{prop:product}$. There is a similar equation where we replace $U$ by $V$. It follows that the image of a representative of the special type as above, under the map $\eqref{Eq:oddpair}$, is equal to 
\[  -\int_{M} \ahat(M) \wedge \left[\Ch_\bullet(V) -\Ch_\bullet(U)\right].\]
This agrees with the relative index map $\eqref{eq:relative}$.
\end{proof}

Using this lemma and another two lemmas below, we shall now prove our main result in this section. 

\begin{theorem}\label{Thm:oddpairing}
	For a relative $K$-cycle $[U, V, u_s]\in K^1(M, \partial M)$, that is, $U, V\in U_n(C^\infty(M))$ are two unitaries over $M$ with $u_s \in U_n(C^\infty(\partial M))$, $s\in [0, 1]$, a smooth path of unitaries over $\partial M$ such that $u_0 = U|_{\partial M} $ and $u_1 = V|_{\partial M}$. If $U$ and $V$ are constant along the normal direction near the boundary, then 
	\[  \ind_{[D]}( [U,V, u_s]) = \ind(T_V) -\ind(T_U) +\sflow \left(u_s^{-1}D_{[0,1]} u_s; P_0^{u_s}\right)\]
	where $\sflow \left(u_s^{-1}D_{[0,1]} u_s; P_0^{u_s}\right)$ is the spectral flow of the path of elliptic operators $(u_s^{-1}D_{[0,1]} u_s; P_0^{u_s})$, $s\in [0, 1]$, with APS type boundary conditions $P_0^{u_s}$ as in $\eqref{eq:path}$.
\end{theorem}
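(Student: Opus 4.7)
The plan is to deduce Theorem~\ref{Thm:oddpairing} from Lemma~\ref{lemma:oddindex}: it suffices to show that the discrepancy
\[\Delta := \bigl[\ind(T_V)-\ind(T_U)\bigr] - \bigl[\ind_{APS}(e_V\ldirac^+ e_V) - \ind_{APS}(e_U\ldirac^+ e_U)\bigr]\]
equals $\sflow(e_{u_s}D_{\mathbb S^1\times\partial M} e_{u_s}) - \sflow(u_s^{-1}D_{[0,1]}u_s; P_0^{u_s})$. To compute $\Delta$, I would combine Dai--Zhang's Toeplitz index theorem \eqref{Eq:toep} (applied to $T_U,T_V$ on $M$) with the classical APS index theorem (applied to $e_W\ldirac^+ e_W$ on $\mathbb S^1\times M$ for $W\in\{U,V\}$). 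Proposition~\ref{prop:product} converts $\int_{\mathbb S^1\times M}\ahat\wedge \Ch_\bullet(e_W)$ into $-\int_M \ahat(M)\wedge \Ch_\bullet(W)$, so the interior integrals cancel in the difference $V-U$; the Maslov triple indices $\tau_\mu(U P^\partial U^{-1},P^\partial,\mathcal P_M)$ and $\tau_\mu(V P^\partial V^{-1},P^\partial,\mathcal P_M)$ also cancel by the second remark following \eqref{Eq:toep}, since $U|_{\partial M}$ and $V|_{\partial M}$ are joined by $u_s$. What remains is $\Delta = \Delta_V - \Delta_U$, where $\Delta_W := \overline\eta(e_W D_{\mathbb S^1\times\partial M} e_W) - \overline\eta(\partial M, W)$, and by Theorem~\ref{thm:eta} each $\Delta_W$ is an integer equal to $\sflow(D_{[0,1]}^{\psi, W}; P_t^W)_{t\in[0,\pi/4]} + \sflow(D_{[0,1]}^{\psi, W}(t); P_0^W)_{t\in[0,1]}$.

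The remaining step is to rewrite $\Delta_V - \Delta_U$ as a spectral flow in the $s$ parameter, which I would carry out by invoking the homotopy invariance of spectral flow around two rectangles of self-adjoint elliptic boundary value problems. The first rectangle is the family $(D_{[0,1]}^{\psi, u_s}; P_r^{u_s})$ over $(s,r)\in[0,1]\times[0,\pi/4]$, giving
\[\sflow_s(D_{[0,1]}^{\psi, u_s}; P_{\pi/4}^{u_s}) - \sflow_s(D_{[0,1]}^{\psi, u_s}; P_0^{u_s}) = \sflow_r(D_{[0,1]}^{\psi, V}; P_r^V) - \sflow_r(D_{[0,1]}^{\psi, U}; P_r^U).\]
The second is $(D_{[0,1]}^{\psi, u_s}(t); P_0^{u_s})$ over $(s,t)\in[0,1]\times[0,1]$; since $D_{[0,1]}^{\psi, u_s}(0)=u_s^{-1}D_{[0,1]}u_s$ and $D_{[0,1]}^{\psi, u_s}(1)=D_{[0,1]}^{\psi, u_s}$, it gives
\[\sflow_s(D_{[0,1]}^{\psi, u_s}; P_0^{u_s}) - \sflow_s(u_s^{-1}D_{[0,1]}u_s; P_0^{u_s}) = \sflow_t(D_{[0,1]}^{\psi, V}(t); P_0^V) - \sflow_t(D_{[0,1]}^{\psi, U}(t); P_0^U).\]
Adding the two identities, the term $\sflow_s(D_{[0,1]}^{\psi, u_s}; P_0^{u_s})$ cancels, and the right-hand side becomes precisely $\Delta_V - \Delta_U$. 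Finally, the unitary conjugation $\mathcal U_s$ from Section~\ref{sec:eta} identifies $(D_{[0,1]}^{\psi, u_s}; P_{\pi/4}^{u_s})$ with $e_{u_s} D_{\mathbb S^1\times\partial M} e_{u_s}$, so $\sflow_s(D_{[0,1]}^{\psi, u_s}; P_{\pi/4}^{u_s}) = \sflow(e_{u_s}D_{\mathbb S^1\times\partial M} e_{u_s})$, and the identity $\Delta = \Delta_V - \Delta_U = \sflow(e_{u_s}D_{\mathbb S^1\times\partial M} e_{u_s}) - \sflow(u_s^{-1}D_{[0,1]}u_s;P_0^{u_s})$ is established.

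The main obstacle is the careful bookkeeping of signs and orientations in the two rectangular homotopies of spectral flow, and verifying that the $2$-parameter families of boundary value problems remain admissible (self-adjoint, smooth in the parameters, with an appropriate spectral gap) throughout --- standard but somewhat delicate. A secondary technical point is making precise the reduction to the special representatives $U|_{[0,\epsilon)\times \partial M}=V|_{[0,\epsilon)\times\partial M}$ for which the Lesch--Moscovici--Pflaum pairing formula of Lemma~\ref{lemma:oddindex} was set up, and checking that the manipulations above are compatible with passage to these representatives.
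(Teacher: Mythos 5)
Your proposal is correct and follows essentially the same route as the paper: it combines Lemma~\ref{lemma:oddindex}, the Dai--Zhang formula $\eqref{Eq:toep}$ with the Maslov-index cancellation, Proposition~\ref{prop:product}, and Theorem~\ref{thm:eta}, and your two rectangular homotopy-invariance identities are precisely the two spectral-flow lemmas the paper proves via the $(s,t)$-parametrized families $\left(D^{\psi,u_s}_{[0,1]}; P_t^{u_s}\right)$ and $\left(D^{\psi,u_s}_{[0,1]}(t); P_0^{u_s}\right)$. The only difference is organizational (you package the eta-invariant terms as a discrepancy $\Delta_V-\Delta_U$), not mathematical.
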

\begin{proof}
By formula $\eqref{Eq:toep}$, we have 
\begin{align*}
	&\ind(T_V)  - \ind(T_U) \\
    & = -\int_M \ahat (M) \wedge \Ch_\bullet(V) - \reta(\partial M, V) + \tau_{\mu}(VP^\partial 
V^{-1}, P^\partial, \mathcal P_M) \\
    & \quad+\int_M \ahat (M) \wedge \Ch_\bullet(U) + \reta(\partial M, U) - \tau_{\mu}(UP^\partial U^{-1}, P^\partial, \mathcal P_M) \\
   & = -\int_M \ahat (M) \wedge [\Ch_\bullet(V) -\Ch_\bullet(U)] + \reta(\partial M, U) -\reta(\partial M, V) 
\end{align*}
since $\tau_{\mu}(UP^\partial U^{-1}, P^\partial, \mathcal P_M) = \tau_{\mu}(VP^\partial V^{-1}, P^\partial, \mathcal P_M)$ by \cite[Lemma 6.10]{PK-ML04}.
Notice that  
\begin{align*}
	&\quad \reta(\partial M, U) -\reta(\partial M, V) +\sflow \left(u_s^{-1}D_{[0,1]} u_s; P_0^{u_s}\right)_{0\leq s \leq 1}\\
	& = \reta(e_U D_{\mathbb S^1\times \partial M} e_U) - \sflow(D_{[0,1]}^{\psi, U}; P_t^U) - \sflow \left(D_{[0,1]}^{\psi, U}(t); P_0^U\right) \\
	&\quad - \reta(e_V D_{\mathbb S^1\times \partial M} e_V) + \sflow(D_{[0,1]}^{\psi, V}; P_t^V) + \sflow \left(D_{[0,1]}^{\psi, V}(t); P_0^V\right) \\
	& \quad + \sflow \left(u_s^{-1}D_{[0,1]} u_s; P_0^{u_s}\right)_{0\leq s \leq 1}
\end{align*}
which is equal to 
\[\reta(e_UD_{\mathbb S^1\times \partial M}e_U) -\reta(e_V D_{\mathbb S^1\times \partial M} e_V) + \sflow (e_{u_s}D_{\mathbb S^1\times\partial M}e_{u_s})_{0\leq s \leq 1}\]
by the lemmas below.
Hence 
\begin{align*}
& \ind(T_V) -\ind(T_U) +\sflow \left(u_s^{-1}D_{[0,1]} u_s; P_0^{u_s}\right)_{0\leq s \leq 1}\\
& = -\int_M \ahat (M) \wedge [\Ch_\bullet(V) -\Ch_\bullet(U)] \\
& \quad -\reta(e_V D_{\mathbb S^1\times \partial M} e_V) +\reta(e_U D_{\mathbb S^1\times \partial M} e_U) + \sflow (e_{u_s}  D_{\mathbb S^1\times \partial M} e_{u_s})_{0\leq s \leq 1}\\
& =  \ind_{APS} (e_V\ldirac^+ e_V) - \ind_{APS} (e_U\ldirac^+ e_U) +\sflow (e_{u_s} D_{\mathbb S^1\times \partial M}e_{u_s})_{0\leq s \leq 1}
\end{align*}
which is equal to $\ind_{[D]}([U, V, u_s])$ by Lemma $\ref{lemma:oddindex}$.
\end{proof}

\begin{lemma}
\begin{align*}
&\sflow\left(D^{\psi, u_s}_{[0,1]};  P_0^{u_s}\right)_{0\leq s \leq 1} \\
& =\sflow(D_{[0,1]}^{\psi, U}; P_t^U) - \sflow(D_{[0,1]}^{\psi, V}; P_t^V) + \sflow (e_{u_s}D_{\mathbb S^1\times \partial M}e_{u_s})_{0\leq s \leq 1}
\end{align*}
\end{lemma}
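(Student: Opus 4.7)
The plan is to set up the spectral flow of a two-parameter family of self-adjoint elliptic boundary value problems on the rectangle $(s,t) \in [0,1] \times [0,\pi/4]$ and exploit the homotopy invariance of spectral flow.

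First I would identify $\sflow(e_{u_s}D_{\mathbb S^1\times \partial M}e_{u_s})_{0\leq s \leq 1}$ with $\sflow(D^{\psi,u_s}_{[0,1]}; P_{\pi/4}^{u_s})_{0\leq s \leq 1}$. This is exactly the computation performed in Section \ref{sec:eta}: conjugation by the unitary $\mathcal{U}_{u_s}$ (constructed from $u_s$ as in the proof of Lemma \ref{Lemma:ep}) transforms $e_{u_s}D_{\mathbb S^1\times \partial M}e_{u_s}$ on $\mathbb S^1 \times \partial M$ into $D^{\psi,u_s}_{[0,1]}$ on $[0,1] \times \partial M$ with the boundary condition encoded by $P_{\pi/4}^{u_s}$, provided $\psi = 1 - f_2$. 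Since $\mathcal{U}_{u_s}$ depends smoothly on $s$ and spectral flow is invariant under such unitary conjugations, the two spectral flows coincide.

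Next I would view $(D^{\psi,u_s}_{[0,1]}; P_t^{u_s})$ as a two-parameter family of self-adjoint elliptic problems indexed by $(s,t) \in [0,1] \times [0,\pi/4]$. By the homotopy invariance of spectral flow (both the operator and the projection depend smoothly on $(s,t)$, and each fixed pair defines a Fredholm self-adjoint problem by Proposition \ref{prop:path} and the standard theory), the spectral flow taken around the boundary of this rectangle vanishes. Traversing the boundary counter-clockwise and using the four edges:
\begin{align*}
&\sflow(D^{\psi,u_s}_{[0,1]}; P_0^{u_s})_{0\leq s \leq 1} + \sflow(D^{\psi,V}_{[0,1]}; P_t^V)_{0\leq t \leq \pi/4} \\
&\qquad - \sflow(D^{\psi,u_s}_{[0,1]}; P_{\pi/4}^{u_s})_{0\leq s \leq 1} - \sflow(D^{\psi,U}_{[0,1]}; P_t^U)_{0\leq t \leq \pi/4} = 0,
\end{align*}
where on the vertical edges we used that $u_0 = U|_{\partial M}$ and $u_1 = V|_{\partial M}$. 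Rearranging and substituting the identification from the first step yields the claimed formula.

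The main subtlety I expect is justifying the homotopy invariance of spectral flow in this two-parameter setting with varying domains: the domain of $(D^{\psi,u_s}_{[0,1]}; P_t^{u_s})$ changes both with $s$ (since the operator itself changes) and with $t$ (since the boundary projection rotates). One handles this exactly as in Br\"{u}ning--Lesch and as indicated in the path construction \eqref{eq:path}: the family $P_t^{u_s}$ lies in the Grassmannian of APS-admissible boundary projections, the resulting unbounded self-adjoint operators form a continuous family in the Riesz (or gap) topology, and spectral flow is a homotopy invariant on such families. Once this is in place, the rectangle argument is essentially formal.
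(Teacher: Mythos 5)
Your proposal is correct and follows essentially the same route as the paper: the identification $e_{u_s}D_{\mathbb S^1\times \partial M}e_{u_s} = \bigl(D^{\psi,u_s}_{[0,1]}; P_{\pi/4}^{u_s}\bigr)$ via the conjugation of Section \ref{sec:eta}, followed by homotopy invariance of the spectral flow applied to the two-parameter family $\bigl(D^{\psi,u_s}_{[0,1]}; P_t^{u_s}\bigr)$ around the boundary of the $(s,t)$-rectangle, which is exactly the square diagram in the paper's proof. The sign bookkeeping in your rearrangement matches the stated formula, so no changes are needed.
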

\begin{proof}
Consider the $(t, s)$-parametrized family of operators 
\[\left(D^{\psi, u_s}_{[0,1]}; P_t^{u_s}\right)_{(0\leq t \leq \pi/4 \ ; \ 0\leq s \leq 1)}\] where $P_t^{u_s}$
 is defined as in Eq. $\eqref{eq:path}$.
Note that \[  P_0^{u_s} = \begin{pmatrix} P^\partial & 0 \\ 0 & \Id-u_s^{-1}P^\partial u_s \end{pmatrix}\quad  \textup{and} \quad P_{\pi/4}^{u_s} =\frac{1}{2}\begin{pmatrix} 1 & -u_s\\ -u_s^{-1} & 1\end{pmatrix}.\]
Hence
\[ e_{u_s}D_{\mathbb S^1\times \partial M} e_{u_s} = ( D^{\psi, u_s}_{[0,1]} ;  P_{\pi/4}^{u_s}) .\]
Consider the following diagram
\[\xymatrixcolsep{7pc}\xymatrixrowsep{4pc}\xymatrix{
\left(D^{\psi, V}_{[0,1]}; P_0^V\right)  & \left(D^{\psi, V}_{[0,1]}; P_{\pi/4}^V\right) \ar@{~>}[l]_{\displaystyle \left(D^{\psi, V}_{[0,1]}; P_t^V\right)}\\
\left(D^{\psi, U}_{[0,1]}; P_0^U\right) \ar@{~>}[u]^{\displaystyle \left(D^{\psi, u_s}_{[0,1]}; P_0^{u_s}\right)}\ar@{~>}[r]_{\displaystyle \left(D^{\psi, U}_{[0,1]}; P_t^U\right)} & \left(D^{\psi, U}_{[0,1]}; P_{\pi/4}^U\right) \ar@{~>}[u]_{\displaystyle \left(D^{\psi, u_s}_{[0,1]}; P_{\pi/4}^{ u_s}\right)} 
}
\]
where the arrows stand for smooth paths connecting the corresponding vertices.
Now the lemma follows from the homotopy invariance of the spectral flow.
\end{proof}

Now let 	
\[ 	D^{\psi, u_s}_{[0,1]}(t) := D_{[0,1]} + (1-t\psi) u_s^{-1}\left[D_{[0,1]}, u_s\right],\] then  the same argument above  proves the following lemma.

\begin{lemma}
	\begin{align*}
	& \sflow\left( u_s^{-1}D_{[0,1]}u_s; P_0^{u_s}\right)_{0\leq s \leq 1}  \\
	&=	\sflow \left(D_{[0,1]}^{\psi, U}(t); P_0^U\right) - \sflow \left(D_{[0,1]}^{\psi, V}(t); P_0^V\right) + \sflow\left(D^{\psi, u_s}_{[0,1]};  P_0^{u_s}\right)_{0\leq s \leq 1}  
	\end{align*}

\end{lemma}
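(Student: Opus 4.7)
The plan is to reuse the homotopy-square argument from the preceding lemma, but now applied to a different two-parameter deformation. Specifically, I would consider the family of self-adjoint elliptic boundary value problems
\[
\bigl(D^{\psi,u_s}_{[0,1]}(t);\,P_0^{u_s}\bigr),\qquad (t,s)\in[0,1]\times[0,1],
\]
in which $t$ controls the interpolation between the conjugated Dirac operator and the Br\"uning--Lesch style operator $D^{\psi,u_s}_{[0,1]}$, while $s$ moves along the path $u_s$ from $U|_{\partial M}$ to $V|_{\partial M}$. Note that the boundary projection $P_0^{u_s}$ depends only on $s$, not on $t$.

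The first step is to identify the four corners of the square. From the definition $D^{\psi,u_s}_{[0,1]}(t)=D_{[0,1]}+(1-t\psi)\,u_s^{-1}[D_{[0,1]},u_s]$ together with the identity $u_s^{-1}D_{[0,1]}u_s=D_{[0,1]}+u_s^{-1}[D_{[0,1]},u_s]$, one reads off $D^{\psi,u_s}_{[0,1]}(0)=u_s^{-1}D_{[0,1]}u_s$ and $D^{\psi,u_s}_{[0,1]}(1)=D^{\psi,u_s}_{[0,1]}$. Since $u_0=U|_{\partial M}$ and $u_1=V|_{\partial M}$, the four corners are
\[
(U^{-1}D_{[0,1]}U;P_0^U),\quad (V^{-1}D_{[0,1]}V;P_0^V),\quad (D^{\psi,U}_{[0,1]};P_0^U),\quad (D^{\psi,V}_{[0,1]};P_0^V),
\]
and the four edges (bottom: $t$ varies, $s=0$; top: $t$ varies, $s=1$; left: $s$ varies, $t=0$; right: $s$ varies, $t=1$) carry precisely the four spectral flows appearing in the statement.

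Invoking the homotopy invariance of the spectral flow along the boundary of this square then gives the equality of the sums along opposite pairs of edges, and a single rearrangement yields the desired identity. The only point that needs to be checked, exactly as in the preceding lemma, is that every operator in the two-parameter family really is a self-adjoint elliptic boundary value problem, so that the spectral flow is well defined throughout the homotopy. This is immediate, because the principal symbol of $D^{\psi,u_s}_{[0,1]}(t)$ coincides with that of $D_{[0,1]}$ for every $(t,s)$, and $P_0^{u_s}$ is of APS type with respect to this fixed symbol for each $s$. The main (minor) obstacle is therefore bookkeeping: making sure the orientations of the four edges are compatible with the signs in the claimed identity, which is a matter of being careful with the directions of the arrows in the homotopy square.
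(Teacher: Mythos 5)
Your proposal is correct and follows essentially the same route as the paper: the paper's proof also considers the two-parameter family $\bigl(D^{\psi,u_s}_{[0,1]}(t);P_0^{u_s}\bigr)_{0\leq t,s\leq 1}$, identifies the corner operators via $D^{\psi,u_s}_{[0,1]}(0)=u_s^{-1}D_{[0,1]}u_s$ and $D^{\psi,u_s}_{[0,1]}(1)=D^{\psi,u_s}_{[0,1]}$, and concludes by homotopy invariance of the spectral flow over the square.
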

\begin{proof}
Consider the $(s, t)$-parametrized family of operators\[\left(D^{\psi, u_s}_{[0,1]}(t), P_0^{u_s} \right)_{0\leq t, s \leq 1}\] cf. the following diagram
	\[\xymatrixcolsep{7pc}\xymatrixrowsep{4pc}\xymatrix{
	\left(D^{\psi, V}_{[0,1]}(0); P_0^V\right)  & \left(D^{\psi, V}_{[0,1]}(1); P_0^V\right) \ar@{~>}[l]_{\displaystyle\left(D^{\psi, V}_{[0,1]}(t); P_0^V\right)}\\
	\left(D^{\psi, U}_{[0,1]}(0); P_0^U\right) \ar@{~>}[u]^{\displaystyle\left(D^{\psi, u_s}_{[0,1]}, P_0^{u_s}\right)}\ar@{~>}[r]_{\displaystyle\left(D^{\psi, V}_{[0,1]}(t); P_0^U\right)} & \left(D^{\psi, U}_{[0,1]}(1); P_0^U\right) \ar@{~>}[u]_{\displaystyle\left(D^{\psi, u_s}_{[0,1]}; P_0^{u_s}\right)} 
	}
	\]
Notice that $D^{\psi, u_s}_{[0,1]}(1) = D^{\psi, u_s}_{[0,1]}$ and $D^{\psi, u_s}_{[0,1]}(0) = u_s^{-1} D_{[0,1]}u_s$. The lemma follows by the homotopy invariance of the spectral flow.
\end{proof}

\appendix

\section{Spinor Bundles and Dirac on Manifolds with boundary}\label{app:spinor}
The material in this appendix is well known. The purpose is to clarify the relations among various Dirac operators arising in this article for the convenience of the reader.
Suppose $M$ is an odd dimensional spin manifold with boundary. Its Riemannian metric assumes a product structure near the boundary. Let $\mathcal S$ (resp. $\mathcal S_M$) be the spinor bundle over $\mathbb S^1 \times \partial M$ ( resp. $M$). Then $\textup{Cl}(T_{\partial M})$ the Clifford algebra over $\partial M$ is identified with the even part of $\textup{Cl}(T_{\mathbb S^1\times \partial M})$ the Clifford algebra over $\mathbb S^1\times \partial M$ by
\[ \cl^{\partial}(e_i) \mapsto \cl(e_i)\cdot \cl(d/d\theta).  \] 
where $\cl^{\partial}(\cdot)$, resp. $\cl(\cdot)$, is the Clifford multiplication on  $\mathcal S^{\partial}$, resp. $\mathcal S$. This way $\mathcal S|_{\partial M}$, the restriction of $\mathcal S$ to $\{0\}\times \partial M$,  is identified with $\mathcal S^{\partial} = \mathcal S^{\partial, +} \oplus \mathcal S^{\partial, -}$ the spinor bundle over $\partial M$.

Notice that $\widehat {\mathcal S}$, the spinor bundle over $[0, 1) \times \mathbb S^1 \times  \partial M$, is naturally isomorphic to $\mathbb C^2  \widehat{\otimes} \mathcal S^{\partial}$. Here $\mathbb C^2 = \mathbb C^+ \oplus \mathbb C^-$  and  $\widehat{\otimes}$ stands for graded tensor product. 
Denote the Dirac operator over $[0, 1)\times \mathbb S^1\times \partial M$ by $\ldirac$. Then
\[ \ldirac = \begin{pmatrix} 0 & -\frac{d}{dx}+ i\frac{d}{d\theta} \\ \frac{d}{dx}+ i\frac{d}{d\theta}   & 0\end{pmatrix} \widehat\otimes I_{\mathcal S^{\partial}} + I_{\mathbb C^2}\widehat\otimes D^\partial.\]
We identify $\textup{Cl}(T_{\mathbb S^1\times \partial M})$ the Clifford algebra over $\mathbb S^1\times \partial M$ with the even part of $\textup{Cl}(T_{\mathbb S^1\times M})$ the Clifford algebra over $\mathbb S^1\times M$ by
\[ \cl(e_i)  \mapsto \widehat\cl(e_i)\cdot \widehat\cl(d/dx)  \] 
for $e_i\in T_{\mathbb S^1\times \partial M}$, where $\widehat\cl(\cdot)$ is Clifford multiplication on $\widehat {\mathcal S}$. From this, one has
\begin{align*}
	 & \widehat {\mathcal S}^+ = \mathbb C^+\otimes \mathcal S^{\partial, +} \oplus \mathbb C^-\otimes \mathcal S^{\partial, -} \cong S^{\partial, +} \oplus \mathcal S^{\partial, -} \equiv \mathcal S\\
	 & \widehat{\mathcal S}^- = \mathbb C^-\otimes \mathcal S^{\partial, +} \oplus \mathbb C^+\otimes \mathcal S^{\partial, -} \cong \cl(d/dx)\widehat {\mathcal S}^+, 
\end{align*}

\begin{lemma}\label{Lemma:dirac}
With the idenfications of spinor bundles as above, 
		\begin{align*} \ldirac &= \begin{pmatrix} & -\frac{d}{dx} + D_{\mathbb S^1\times \partial M} \\ \frac{d}{dx} + 
			D_{\mathbb S^1\times \partial M} \end{pmatrix} \\
		       & = \begin{pmatrix} &  & -\frac{d}{dx}+ i\frac{d}{d\theta} & iD^\partial|_{ S^{\partial, -} }  \\
		                  &   & -iD^\partial|_{ S^{\partial, +} } & -\frac{d}{dx}- i\frac{d}{d\theta} \\
		                  \frac{d}{dx}+ i\frac{d}{d\theta} & iD^\partial|_{S^{\partial, + }} &  & \\
		                  -iD^\partial|_{S^{\partial, +}} & \frac{d}{dx} - i\frac{d}{d\theta}  &  &   \end{pmatrix}
		\end{align*}
where $D_{\mathbb S^1\times \partial M}$ (resp. $D^\partial$) is the Dirac operator over $\mathbb S^1\times \partial M$ (resp. $\partial M$). In particular, 
	\[  
D_{\mathbb S^1\times \partial M} = \cl(d/{d\theta})\left(\frac{d}{d\theta} + D^\partial\right)
	\]
with \[ \cl(d/d\theta) = \begin{pmatrix}  i &   \\
	                   & - i
	                  \end{pmatrix}\quad \textup{and} \quad D^\partial = \begin{pmatrix} &  D^\partial|_{\mathcal S^{\partial, -} }  \\
	                  D^\partial|_{\mathcal S^{\partial, +} } & \end{pmatrix}. \]
\end{lemma}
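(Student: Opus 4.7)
The plan is to derive the matrix form of $\ldirac$ displayed in the lemma directly from the factorized expression for $\ldirac$ stated in the paragraph immediately preceding the lemma, then read off the block decomposition and recognize $D_{\mathbb S^1 \times \partial M}$ inside it. Since the collar of $\mathbb S^1 \times M$ has the product form $[0,1)_x \times \mathbb S^1_\theta \times \partial M$, the spinor bundle factorizes as $\widehat{\mathcal S} = \mathbb C^2 \widehat\otimes \mathcal S^\partial$ with $\mathbb C^2$ the spinors of $[0,1) \times \mathbb S^1$, and the lemma is essentially a calculation inside this graded tensor product. I would organize it in three steps.

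First I would expand $\ldirac$ in the ordered basis $\mathbb C^+ \otimes \mathcal S^{\partial,+}, \ \mathbb C^+ \otimes \mathcal S^{\partial,-}, \ \mathbb C^- \otimes \mathcal S^{\partial,+}, \ \mathbb C^- \otimes \mathcal S^{\partial,-}$. The first summand of $\ldirac$ is off-diagonal in the $\mathbb C^\pm$ factor and the identity in $\mathcal S^\partial$, so it populates the $(1,3),(2,4),(3,1),(4,2)$ entries with $\pm\partial_x \pm i\partial_\theta$. For the second summand $I_{\mathbb C^2} \widehat\otimes D^\partial$, I would invoke the Koszul sign of the graded tensor product: since $D^\partial$ is odd, $(I \widehat\otimes D^\partial)(v \otimes w) = (-1)^{|v|}\, v \otimes D^\partial w$, so $D^\partial$ contributes $+D^\partial$ on the $\mathbb C^+$-lines and $-D^\partial$ on the $\mathbb C^-$-lines; this yields the $(1,2),(2,1),(3,4),(4,3)$ entries with the signs shown in the statement.

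Second, using the identifications $\widehat{\mathcal S}^+ = \mathbb C^+ \otimes \mathcal S^{\partial,+} \oplus \mathbb C^- \otimes \mathcal S^{\partial,-} \cong \mathcal S$ and $\widehat{\mathcal S}^- = \mathbb C^- \otimes \mathcal S^{\partial,+} \oplus \mathbb C^+ \otimes \mathcal S^{\partial,-} \cong \widehat\cl(d/dx)\mathcal S$, I would reorder the four basis vectors and read off the $2\times 2$ block form $\ldirac = \begin{pmatrix} 0 & -\partial_x + D_{\mathbb S^1 \times \partial M} \\ \partial_x + D_{\mathbb S^1 \times \partial M} & 0 \end{pmatrix}$, leaving only the identification of the $D^\partial$-bearing block with the intrinsic Dirac operator on $\mathbb S^1 \times \partial M$.

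Third, I would verify this identification using the embedding $\mathrm{Cl}(T(\mathbb S^1 \times \partial M)) \hookrightarrow \mathrm{Cl}(T(\mathbb S^1 \times M))^{\text{even}}$ given by $\cl(e_i) \mapsto \widehat\cl(e_i)\widehat\cl(d/dx)$. A direct $2\times 2$ computation on the $\mathbb C^2$ factor gives $\cl(d/d\theta) = \widehat\cl(d/d\theta)\widehat\cl(d/dx) = \mathrm{diag}(i,-i)$ on $\widehat{\mathcal S}^+ \cong \mathcal S$. For the tangential directions, the relation $\cl^\partial(e_i^\partial) = \cl(e_i^\partial)\cl(d/d\theta)$ together with $\cl(d/d\theta)^2 = -1$ and the anticommutation of $\cl(d/d\theta)$ with $\cl(e_i^\partial)$ gives $\cl(e_i^\partial) = \cl(d/d\theta)\cl^\partial(e_i^\partial)$, so $\sum_i \cl(e_i^\partial)\partial_i^\partial = \cl(d/d\theta) D^\partial$. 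Adding the $\theta$-contribution produces $D_{\mathbb S^1 \times \partial M} = \cl(d/d\theta)(\partial_\theta + D^\partial)$, which plugged into the block form recovers exactly the $4\times 4$ matrix displayed in the lemma.

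The only real obstacle is the bookkeeping of signs: one must fix the 2D Clifford representation ($\widehat\cl(d/dx)$ and $\widehat\cl(d/d\theta)$ acting on $\mathbb C^2$) compatibly with the paper's formula for the pure $(x,\theta)$-part of $\ldirac$, and then track the graded-tensor-product sign $(-1)^{|v|}$ consistently so that the diagonal $\mathrm{diag}(i,-i)$ for $\cl(d/d\theta)$ emerges with the correct sign pattern; every other step is essentially a direct matrix computation.
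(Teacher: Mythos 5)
There is a genuine gap in your Steps 1--2. You propose to expand the bookkeeping formula $\ldirac = A\widehat\otimes I_{\mathcal S^{\partial}} + I_{\mathbb C^2}\widehat\otimes D^\partial$ with Koszul signs and then obtain the displayed $4\times 4$ matrix by ``reordering the four basis vectors''. A permutation of basis vectors only permutes entries: it can neither flip the sign of $i\frac{d}{d\theta}$ (the lemma's $(2,4)$ and $(4,2)$ entries are $-\frac{d}{dx}-i\frac{d}{d\theta}$ and $\frac{d}{dx}-i\frac{d}{d\theta}$, whereas every derivative entry in your Koszul expansion of $A\widehat\otimes I$ carries $+i\frac{d}{d\theta}$), nor create the factors $\pm i$ on the $D^\partial$ entries (your expansion gives $\pm D^\partial$, the lemma has $\pm i D^\partial$, contrary to your claim that the Koszul rule already yields ``the signs shown in the statement''). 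Those $i$'s and that sign pattern come from two identifications that are not relabelings: (i) $\widehat{\mathcal S}^-\cong \widehat\cl(d/dx)\widehat{\mathcal S}^+$, so writing the off-diagonal blocks as operators on $\mathcal S$ means composing with $\mp\widehat\cl(d/dx)$, an odd endomorphism acting as $\left(\begin{smallmatrix}0&-1\\ 1&0\end{smallmatrix}\right)$ on the $\mathbb C^2$ factor; and (ii) the boundary Clifford structure on $\widehat{\mathcal S}^+\cong\mathcal S$ is the one defined by $\cl(e_i)=\widehat\cl(e_i)\widehat\cl(d/dx)$ and $\cl^{\partial}(e_i)=\cl(e_i)\cl(d/d\theta)$, which does \emph{not} coincide with the naive tensor-factor action implicit in $I_{\mathbb C^2}\widehat\otimes D^\partial$; the discrepancy is exactly the intertwiner $\cl(d/d\theta)=\mathrm{diag}(i,-i)$, and that is where every $i$ in the matrix originates. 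As organized, your Steps 1--2 produce a matrix visibly different from the one in the lemma, and reconciling the two is not bookkeeping --- it is the actual content of the proof, which your outline assumes rather than supplies.

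The repair is essentially your Step 3, applied earlier and to the right object: the paper never expands the tensor-product formula. It writes $\ldirac=\widehat\cl(d/dx)\frac{d}{dx}+\widehat\cl(d/d\theta)\frac{d}{d\theta}+\sum_i\widehat\cl(e_i)\nabla_{e_i}$, computes $-\widehat\cl(d/dx)\,\ldirac|_{\widehat{\mathcal S}^+}$ and $\ldirac|_{\widehat{\mathcal S}^-}\,\widehat\cl(d/dx)$, and uses the Clifford relations and the identifications above to rewrite these as $\pm\frac{d}{dx}+\cl(d/d\theta)\left(\frac{d}{d\theta}+D^\partial\right)$, thereby recognizing $D_{\mathbb S^1\times\partial M}$; only at the end does it compute $\cl(d/d\theta)=\widehat\cl(d/d\theta)\cdot\widehat\cl(d/dx)=\mathrm{diag}(i,-i)$ on the $\mathbb C^2$ factor to obtain the explicit $4\times 4$ form. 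If you run your Clifford-algebra argument of Step 3 directly on the blocks obtained by composing with $\mp\widehat\cl(d/dx)$, you recover the paper's proof; routing through the $\mathbb C^2\widehat\otimes\mathcal S^{\partial}$ formula obliges you to also prove that the $D^\partial$ appearing there agrees, under all the identifications, with the $D^\partial$ of the lemma --- the step your proposal currently skips.
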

\begin{proof}
With the identification $\widehat {\mathcal S}^- = \widehat \cl(d/dx)\widehat {\mathcal S}^+$, one has
\begin{align*}
	-\widehat \cl(d/dx)\ldirac |_{\mathcal S^+} & = -\widehat \cl(d/dx) \left(\widehat \cl(d/dx)\frac{d}{dx} + \widehat \cl(d/d\theta)\frac{d}{d\theta}+ \sum_{i}\widehat \cl(e_i)\nabla_{e_i}\right)\\
	& = \frac{d}{dx}  -\widehat \cl(d/dx)\cdot \widehat \cl(d/d\theta)\frac{d}{d\theta} - \sum_{i}\widehat \cl(d/dx)\cdot \widehat \cl(e_i)\nabla_{e_i}\\
	& = \frac{d}{dx} + \cl(d/d\theta)\frac{d}{d\theta} + \sum_{i} \cl(e_i)\nabla_{e_i}\\
	& = \frac{d}{dx} + \cl(d/d\theta)\left(\frac{d}{d\theta} + \sum_{i} \cl^{\partial}(e_i)\nabla_{e_i}\right)
\end{align*}
Similarly, 
\begin{align*}
	\ldirac |_{\mathcal S^-} \widehat \cl(d/dx) & =  \left(\widehat \cl(d/dx)\frac{d}{dx} + \widehat \cl(d/d\theta)\frac{d}{d\theta}+ \sum_{i}\widehat \cl(e_i)\nabla_{e_i}\right)\widehat \cl(d/dx)\\
	& = -\frac{d}{dx} + \cl(d/d\theta)\left(\frac{d}{d\theta} + \sum_{i} \cl^{\partial}(e_i)\nabla_{e_i}\right)
\end{align*}
Notice that $ \cl(d/{d\theta})\left(\frac{d}{d\theta} + D^\partial\right)$ is the Dirac operator over $\mathbb S^1\times \partial M$, hence 
\[ \ldirac = \begin{pmatrix} & -\frac{d}{dx} + D_{\mathbb S^1\times \partial M} \\ \frac{d}{dx} + 
	D_{\mathbb S^1\times \partial M} \end{pmatrix}. \]
To finish the proof, one notices that
\begin{align*} \cl(d/d\theta) = \widehat \cl(d/d\theta) \cdot \widehat \cl(d/dx) &= \begin{pmatrix} 0 & i \\ i & 0   \end{pmatrix}\widehat\otimes I_{S^{\partial}} \cdot \begin{pmatrix} 0 &  -1 \\ 1 & 0  \end{pmatrix}\widehat\otimes I_{S^{\partial}}  \\ 
& =\begin{pmatrix} i&  0 \\ 0 & -i  \end{pmatrix}\widehat\otimes I_{S^{\partial}}
\end{align*}

\end{proof}


\begin{thebibliography}{10}

\bibitem{MA67}
{\sc M.~F. Atiyah}, {\em {$K$}-theory}, Lecture notes by D. W. Anderson, W. A.
  Benjamin, Inc., New York-Amsterdam, 1967.

\bibitem{A-P-S75a}
{\sc M.~F. Atiyah, V.~K. Patodi, and I.~M. Singer}, {\em Spectral asymmetry and
  {R}iemannian geometry. {I}}, Math. Proc. Cambridge Philos. Soc., 77 (1975),
  pp.~43--69.

\bibitem{A-P-S76}
\leavevmode\vrule height 2pt depth -1.6pt width 23pt, {\em Spectral asymmetry
  and {R}iemannian geometry. {III}}, Math. Proc. Cambridge Philos. Soc., 79
  (1976), pp.~71--99.

\bibitem{B-D-T89}
{\sc P.~Baum, R.~G. Douglas, and M.~E. Taylor}, {\em Cycles and relative cycles
  in analytic {$K$}-homology}, J. Differential Geom., 30 (1989), pp.~761--804.

\bibitem{BB-KW93}
{\sc B.~Boo{\ss}-Bavnbek and K.~P. Wojciechowski}, {\em Elliptic boundary
  problems for {D}irac operators}, Mathematics: Theory \& Applications,
  Birkh\"auser Boston Inc., Boston, MA, 1993.

\bibitem{JB-ML99}
{\sc J.~Br{\"u}ning and M.~Lesch}, {\em On the {$\eta$}-invariant of certain
  nonlocal boundary value problems}, Duke Math. J., 96 (1999), pp.~425--468.

\bibitem{AC85}
{\sc A.~Connes}, {\em Noncommutative differential geometry}, Inst. Hautes
  \'Etudes Sci. Publ. Math.,  (1985), pp.~257--360.

\bibitem{AC94}
\leavevmode\vrule height 2pt depth -1.6pt width 23pt, {\em Noncommutative
  geometry}, Academic Press Inc., San Diego, CA, 1994.

\bibitem{XD-WZ06}
{\sc X.~Dai and W.~Zhang}, {\em An index theorem for {T}oeplitz operators on
  odd-dimensional manifolds with boundary}, J. Funct. Anal., 238 (2006),
  pp.~1--26.

\bibitem{NH-JR00}
{\sc N.~Higson and J.~Roe}, {\em Analytic {$K$}-homology}, Oxford Mathematical
  Monographs, Oxford University Press, Oxford, 2000.
\newblock Oxford Science Publications.

\bibitem{PK-ML04}
{\sc P.~Kirk and M.~Lesch}, {\em The {$\eta$}-invariant, {M}aslov index, and
  spectral flow for {D}irac-type operators on manifolds with boundary}, Forum
  Math., 16 (2004), pp.~553--629.

\bibitem{L-M-P09}
{\sc M.~Lesch, H.~Moscovici, and M.~J. Pflaum}, {\em Connes-{C}hern character for
  manifolds with boundary and eta cochains}.
\newblock http://arxiv.org/abs/0912.0194.

\bibitem{JL92}
{\sc J.-L. Loday}, {\em Cyclic homology}, vol.~301 of Grundlehren der
  Mathematischen Wissenschaften [Fundamental Principles of Mathematical
  Sciences], Springer-Verlag, Berlin, 1992.
\newblock Appendix E by Mar{\'{\i}}a O. Ronco.

\bibitem{TL88}
{\sc T.~A. Loring}, {\em {$K$}-theory and asymptotically commuting matrices},
  Canad. J. Math., 40 (1988), pp.~197--216.

\bibitem{MR81}
{\sc M.~A. Rieffel}, {\em {$C^{\ast} $}-algebras associated with irrational
  rotations}, Pacific J. Math., 93 (1981), pp.~415--429.

\bibitem{WO93}
{\sc N.~E. Wegge-Olsen}, {\em {$K$}-theory and {$C^*$}-algebras}, Oxford
  Science Publications, The Clarendon Press Oxford University Press, New York,
  1993.
\newblock A friendly approach.

\end{thebibliography}
\end{document}